\newtheorem{Theorem}{Theorem}[section]
\newtheorem{Lemma}[Theorem]{Lemma}
\newtheorem{Corollary}[Theorem]{Corollary}
\newtheorem{Proposition}[Theorem]{Proposition}
\theoremstyle{definition}
\newtheorem{Definition}[Theorem]{Definition}
\newtheorem{Remark}[Theorem]{Remark}
\def\C{\mathbb C}
\def\N{\mathbb N}
\def\R{\mathbb R}
\newcommand{\CF}{\mathcal{F}}
\newcommand{\CH}{\mathcal{H}}
\newcommand{\CJ}{\mathcal{J}}
\newcommand{\CV}{\mathcal{V}}
\def\be{\begin{equation}}
\def\ee{\end{equation}}
\def\bt{\begin{Theorem}}
\def\et{\end{Theorem}}
\def\bi{\begin{itemize}}
\def\ei{\end{itemize}}
\def\bea{\begin{eqnarray}}
\def\eea{\end{eqnarray}}
\def\beast{\begin{eqnarray*}}
\def\eeast{\end{eqnarray*}}
\def\ben{\begin{enumerate}}
\def\een{\end{enumerate}}
\def\bi{\bibitem}
\newcommand{\norm}[1]{\left\Vert#1\right\Vert}
\def\lan{{\langle}}
\def\ran{{\rangle}}
\newcommand{\half}{{\frac{1}{2}}}
\renewcommand{\MR}[1]{} 
\newcommand{\abs}[1]{\left\vert#1\right\vert}
\begin{document}
\title[On the commutants of generators]{On the commutants of  generators of $q$-deformed  Araki-Woods von Neumann algebras}

\author[Bikram]{Panchugopal Bikram}
\author[Mukherjee]{Kunal Mukherjee}


\address{School of Mathematical Sciences,
National Institute of Science Education and Research, Bhubaneswar, Utkal - 752050, India.}
\address{Department of Mathematics, IIT Madras, Chennai - 600036, India.}
\email{bikram@niser.ac.in, kunal@iitm.ac.in}

\keywords{ $q$-commutation relations, von Neumann algebras}
\subjclass[2010]{Primary  46L10,  46L54 ; Secondary 46L40, 46L53, 46L54, 46L36, 46C99.}

\begin{abstract} 
The generating abelian subalgebras arsing from vectors in the ergodic component of Hiai's construction of the $q$-deformed Araki-Woods von Neumann algebras are quasi-split.
\end{abstract}

\maketitle

\section{introduction}

The $q$-deformed Araki-Woods von Neumann algebras constructed by Hiai in \cite{Hiai} combining Shlyakhtenko's construction of free Araki-Woods factors in \cite{Shlyakhtenko} and Bo$\overset{.}{\text{z}}$ejko-Speicher's construction in \cite{BS} are complicated objects. While there has been substantial advancement towards the structure of free Araki-Woods factors in the recent years $($\cite{HSV17} being the latest$)$, very little is known about the  $q$-deformed Araki-Woods von Neumann algebras; even the fundamental question about factoriality of such algebras is open. The difficulty in studying the algebras constructed by Hiai is that there is minimal room to perform and control meaningful computations with the generators to develop insight. 

Hiai's construction associates a von Neumann algebra to a real Hilbert space equipped with an orthogonal representation of $\R$ and a parameter $-1<q<1$ in a way that interpolates between the bosonic and fermionic statistics.  In \cite{BM17}, we proved that the $q$-deformed  Araki-Woods von Neumann algebras are factors if the associated orthogonal representation either has a nonzero weakly mixing component or has a fixed vector in the real Hilbert space of dimension bigger than $1$. To prove this, we took advantage of the aforesaid fixed vector to show that the associated self-adjoint generator generates an algebra which posses faithful normal conditional expectation with respect to the vacuum state and that this subalgebra is a strongly mixing MASA living inside the centralizer of the vacuum state.  

MASAs are both classical and fundamental objects in studying von Neumann algebras. In most constructions of von Neumann algebras, it is possible to locate some canonical MASAs and study their properties which further entail structures of the ambient von Neumann algebras. Being unable to construct MASAs in a von Neumann algebra poses tough challenges and in this paper, we highlight that this indeed is the case for the von Neumann algebras of Hiai regardless of the parameter $-1<q<1$ or the initial orthogonal transformation unless such transformation has a fixed point. 

On the other hand, the free Araki-Woods factors of type $\rm{III}_1$ satisfy the Connes' bicentralizer problem \cite{Ho09}. Thus, by the results in \cite{HP17}, there exists a singular MASA in any free Araki-Woods factor of type $\rm{III}_1$ which is the range of a normal conditional expectation.   

In this paper, we show that for any normalized vector on the real Hilbert space associated with the ergodic component of the orthogonal representation, the inclusion of the associated abelian subalgebra inside the ambient von Neumann algebra is quasi-split. The same inclusion is split when the ambient von Neumann algebra is a type $\rm{III}$ factor. Thus, these abelian subalgebras have large relative commutants. The last statement is surprising and unexpected in the sense that there is no easy way to write some operators that commute with these generators.

This difficult nature of the generating abelian subalgebras poses a challenge to construct a single MASA arising from the ergodic component, let alone to have a conditional expectation. Though, we know for sure that such a MASA $($with expectation$)$ exist whenever the ambient algebra is a type $\rm{III}_1$ factor with $\abs{q}$ being small $($from the discussion above$)$ and  in some of the other cases when it is a type $\rm{III}_\lambda$ factor with $\lambda\in (0,1)$ $($see \cite{Po83}$)$.   

The organization of this paper is as follows. In \S\ref{Araki-Woods}, we collect all the necessary facts about the construction of Hiai that is needed to address the problem. To keep the paper self-contained, we make a short account on $($quasi$)$ split inclusions following \cite{BDL90, DL83, F02} in \S\ref{splitinclusionsection}. Finally, in \S\ref{Commutants}, the main section of this paper, we establish $($quasi$)$ split inclusions and qualitatively compute the relative commutants mentioned above.

\section{Construction and Basic Facts}\label{Araki-Woods}

In this section, we collect some facts about the $q$-deformed $($free$)$ Araki-Woods von Neumann algebras constructed by Hiai in \cite{Hiai} that will be indispensable for our purpose. For a detailed exposition, we refer the interested readers to \cite{Hiai,Shlyakhtenko}. This section is preliminary and has some overlap with \S2 and \S3 of \cite{BM17} to keep this paper self-contained. As a convention $($following \cite{Shlyakhtenko,Hiai}$)$, we assume that inner products are linear in the second variable. All von Neumann algebras considered in this paper have separable preduals, all Hilbert spaces are separable and all inclusions of von Neumann algebras are assumed to be unital.

Let $\CH_{\R}$ be a real Hilbert space and let $t\mapsto U_t$, $t\in \mathbb{R}$,
be a strongly continuous orthogonal representation of $\R$ on $\CH_{\R}$. 
Let  $\CH_\C=\CH_{\R}\otimes_\R \C$ denote the complexification of $\CH_{\R}$.
Denote the inner product and the norm on $\CH_\C $ by $\langle \cdot , \cdot \rangle_{\CH_\C} $ 
and $\norm{\cdot}_{\CH_\C}$ respectively. Identify $\CH_{\R}$ in $\CH_{\C}$ by $\CH_{\R} \otimes 1 $. 
Thus, $\CH_\C =  \CH_{\R} + i \CH_{\R}$, and as  a real Hilbert space the inner product of 
$\CH_{\R} $ in $\CH_\C $ is given by $\Re \langle \cdot, \cdot \rangle_{\CH_\C} $.
Consider the bounded anti-linear operator $\mathcal{J}: \CH_\C \rightarrow \CH_\C  $ given  by 
$\mathcal{J}(\xi + i \eta )= \xi - i \eta$, $\xi, \eta \in \CH_{\R} $,
and note that $\mathcal{J}\xi = \xi$ for $\xi\in \CH_{\R}$. Moreover,
\begin{align*}
\langle \xi, \eta \rangle_{\CH_\C} = \overline {\langle \eta , \xi \rangle}_{\CH_\C}  = \langle \eta , \mathcal{J} \xi \rangle_{\CH_\C}, \text{ for all } \xi \in \CH_{\C} , \eta \in \CH_{\R}.
\end{align*}
Linearly extend $t\mapsto U_t$ from $\CH_{\R} $ to a strongly continuous one parameter group of unitaries 
in $\CH_{\C}$  and denote the extensions by $U_t$ for each $t$ with abuse of notation.
Let $A$ denote the analytic generator. Then $A$ is positive,  nonsingular and self-adjoint. 
It is easy to see that $\CJ A = A^{-1}\CJ$. Introduce a new inner product on $\CH_{\C}$ by $\langle \xi, \eta \rangle_U = \langle \frac{2}{1+ A^{-1}} \xi, \eta \rangle_{\CH_\C}$, $\xi, \eta \in \CH_{\C}$, and let $\norm{\cdot}_{U}$ denote the associated norm on $\CH_{\C}$. Let $\CH$ denote the complex Hilbert space obtained by completing $(\CH_{\C}, \norm{\cdot}_{U})$.  The inner product and norm of $\CH$ will respectively be denoted by $\langle\cdot,\cdot\rangle_U$ and $\norm{\cdot}_{U}$ as well. Then, $(\CH_{\R}, \norm{\cdot}_{\CH_\C})\ni \xi \overset{\imath}\mapsto\xi \in (\CH_{\C}, \norm{\cdot}_{U})\subseteq (\CH,\norm{\cdot}_{U})$, is an isometric embedding of the real Hilbert space $\CH_\R$ in $\CH$ $($in the sense of \cite{Shlyakhtenko}$)$. 
With abuse of 
notation, identify $\CH_\R$ with its image $i(\CH_\R)$. Then, $\CH_{\R}\cap i\CH_{\R}=\{0\}$ and $\CH_{\R}+ i\CH_{\R}$ is dense in $\CH$ $($see pp. 332 \cite{Shlyakhtenko}$)$.

As $A$ is affiliated to $vN(U_t:t\in \R)$, note that 
\begin{align}\label{Liftisunitary}
\langle U_t\xi, U_t\eta\rangle_U=\langle \xi,\eta\rangle_U, \text{ for }\xi,\eta\in \CH_\C.
\end{align}
Consequently, $(U_t)$ extends to a strongly continuous unitary representation $(\widetilde{U}_t)$ of $\R$ on $\CH$. 
Let $\widetilde{A}$ be the analytic generator associated to $(\widetilde{U}_t)$, which is obviously an extension of $A$. From the definition of $\langle\cdot,\cdot\rangle_U$ on $\CH_\C$, it follows that if $\mu$ is the spectral measure of $A$, then $\nu=f\mu$ is the spectral measure of $\widetilde{A}$, where $f(x)=\frac{2x}{1+x}$ for $x\in\R_{\geq 0}$, and by the spectral theorem, the multiplicity functions in the associated direct integrals remain the same. Thus, we have the following:

\begin{Proposition}\label{Eigenvector}
Any eigenvector of $\widetilde{A}$ is an eigenvector of $A$ corresponding to the same eigenvalue. 
\end{Proposition}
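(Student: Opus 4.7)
My plan is to build compatible direct integral representations of $A$ and $\widetilde A$ over a common fiber bundle, so that the difference between them is visible only in the underlying scalar measure on $(0,\infty)$. Since $A$ is positive, self-adjoint and nonsingular on $\CH_\C$, the spectral theorem yields a unitary identification $\CH_\C \cong \int_{(0,\infty)}^\oplus \CH(s)\, d\mu(s)$ under which $A$ is multiplication by $s$. The bounded, injective, positive operator $T := 2(1+A^{-1})^{-1} = 2A(A+1)^{-1}$ equals $f(A)$ for $f(s) = 2s/(s+1)$, so under the same identification it is diagonal, multiplying by the scalar $f(s)$ on each fiber $\CH(s)$.

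From $\langle \xi,\eta \rangle_U = \langle T\xi,\eta \rangle_{\CH_\C}$ the $U$-norm then reads
\[
\norm{\xi}_U^2 \;=\; \int_{(0,\infty)} f(s)\norm{\xi(s)}^2\, d\mu(s) \;=\; \int_{(0,\infty)} \norm{\xi(s)}^2\, d\nu(s), \qquad \nu := f\mu,
\]
and completing in this norm identifies $\CH$ isometrically with $\int_{(0,\infty)}^\oplus \CH(s)\, d\nu(s)$ over the same fiber field, with $\widetilde A$ again multiplication by $s$. An eigenvector $\xi$ of $\widetilde A$ at $\lambda$ thus corresponds to a section supported on the single point $\{\lambda\}$, which requires $\nu(\{\lambda\})>0$. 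Since $f>0$ on $(0,\infty)$, the measures $\mu$ and $\nu$ share their atoms, hence $\mu(\{\lambda\})>0$ as well, and the fiber $\CH(\lambda)$ is simultaneously the eigenspace of $A$ and of $\widetilde A$ at $\lambda$. Finally, the atomic section has finite $\CH_\C$-norm $\mu(\{\lambda\})\norm{\xi(\lambda)}^2<\infty$, so $\xi \in \CH_\C$ and $A\xi = \lambda\xi$, as required.

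The only delicate point is arranging that the direct integrals for $\CH_\C$ and $\CH$ live over the same measurable fiber field; this is what makes the matching of atoms meaningful, and it follows immediately from $T$ lying in the bounded Borel functional calculus of $A$. Beyond this common framework, the proof reduces to the trivial observation that a strictly positive density cannot create or destroy atoms.
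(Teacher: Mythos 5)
Your proof is correct and follows essentially the same route as the paper: the paper derives the proposition from the observation that the spectral measure of $\widetilde{A}$ is $\nu=f\mu$ with $f(x)=\frac{2x}{1+x}$ strictly positive and the multiplicity functions unchanged, which is exactly the common-fiber-field direct integral picture you set up. Your write-up merely makes explicit the density and atom-matching details that the paper leaves implicit.
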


Since the spectral information of $A$ and $\widetilde{A}$ $($and hence of $(U_t)$ and $(\widetilde{U}_t))$ are essentially the same, and $\widetilde{U}_t,\widetilde{A}$ are respectively extensions of $U_t,A$ for all $t\in \R$, we would now write 
$\widetilde{A}=A$ and $\widetilde{U}_t=U_t$ for all $t\in\R$. This abuse of notation will cause no confusion.

Following  \cite{BS}, the $q$-Fock space $\CF_q(\CH)$ of $\CH$ is constructed as follows for $ -1 < q < 1 $. Let $\Omega$ be a distinguished unit vector in $\C$ usually referred to as the vacuum vector. Denote $\CH^{\otimes 0}= \C\Omega$, and, for $n\geq 1$, let $\CH^{\otimes n} = \text{ span}_{\C}\{\xi_1 \otimes \cdots \otimes \xi_n : \xi_i \in \CH \text{ for }1\leq i\leq n\}$ denote the algebraic tensor products. Let $\CF_{fin}(\CH)=\text{ span}_{\C}\{\CH^{\otimes n}: n\geq 0 \}$. For $n,m\geq 0$ and $f=\xi_1 \otimes \cdots \otimes \xi_n \in \CH^{\otimes n}$, $g= \zeta_1\otimes \cdots \otimes \zeta_m\in \CH^{\otimes m}$, the association 
\begin{align}\label{qFock}
\langle f, g \rangle_q = \delta_{m, n} 
\sum_{ \pi \in S_n } q^{ i(\pi) } \langle \xi_1, \zeta_{\pi(1) }\rangle_U \cdots\langle \xi_n, \zeta_{\pi(n) } \rangle_U, 
\end{align} 
where $i(\pi)$ denotes the number of  inversions of the permutation $\pi \in S_n $, defines a positive definite sesquilinear form on  $ \CF_{fin}(\CH)$ and the $q$-Fock space $\CF_q(\CH) $ is the completion of $ \CF_{fin}(\CH)$ with respect to the norm $($denoted by $\norm{\cdot}_{q})$ induced by $ \langle \cdot, \cdot \rangle_q $. For $n\in\N$, let $\CH^{\otimes_q n}=\overline{\CH^{\otimes n}}^{\norm{\cdot}_q}$. 

The following formula of norm will be useful (c.f.  \cite{BKS}, \cite{BS},  and \cite{ER}):

If $\xi\in \CH$ and ${\norm{\xi}}_U = 1$, then 
\begin{align}\label{Normelt}
{\norm{\xi^{\otimes n}}}_q^2 = [n]_q!, 
\end{align}
where $[n]_q := 1+ q+ \cdots +  q^{(n-1)}$, $[n]_q! :=\prod_{j=1}^{n} [j]_q, \text{ for }n\geq 1$,    
and $[0]_q := 0$, $[0]_q! := 1$ by convention.

For $\xi\in \CH $, the left $q$-creation and $q$-annihilation operators on $\CF_q(\CH) $ are respectively defined by:
\begin{align}\label{Leftmult}
&c_q(\xi)\Omega  = \xi, \\ 
\nonumber&c_q(\xi) (\xi_1 \otimes \cdots\otimes \xi_n) =\xi \otimes  \xi_1 \otimes \cdots \otimes \xi_n, \text{ and},\\
\nonumber&c_q(\xi)^*\Omega  = 0,\\ 
\nonumber&c_q(\xi)^* (\xi_1 \otimes \cdots\otimes \xi_n) = \sum_{i = 1}^n{q^{i-1}}
\langle   \xi , \xi_i\rangle_U \xi_1 \otimes \cdots \otimes \xi_{i-1} \otimes \xi_{i+1} \otimes \cdots \otimes \xi_n, 
\end{align}
where $\xi_1 \otimes \cdots\otimes \xi_n\in \CH^{\otimes_q n}$ for $n\geq 1$.
The operators $c_q(\xi),c_q(\xi)^*\in\mathbf{B}(\CF_q(\CH))$ and they are 
adjoints of each other. 

Consider the $C^*$-algebra 
$\Gamma_q( \CH_\R, U_t) := C^{*}\{ s_q(\xi) : \xi \in \CH_\R \}$ and the von Neumann algebra
$\Gamma_q( \CH_\R, U_t)^{\prime\prime}$, where $s_q(\xi)  = c_q(\xi) + c_q(\xi)^*$, $\xi \in \CH_\R$.
This von Neumann algebra is known as the $q$-\textit{deformed Araki-Woods von Neumann algebra}  (see \cite[\S3]{Hiai}).  The vacuum state $\varphi_{q, U}:= 
\langle \Omega, \cdot\text{ } \Omega\rangle_q$ $($also called the $q$-\textit{quasi free state}$)$, is a faithful normal state of $\Gamma_q( \CH_\R, U_t)^{\prime\prime}$ and $\CF_{q}(\CH)$ is the GNS Hilbert space of $\Gamma_q( \CH_\R, U_t)^{\prime\prime}$ associated to $\varphi_{q,U}$. Thus, $\Gamma_q( \CH_\R, U_t)^{\prime\prime}$ acting on $\CF_{q}(\CH)$ is in standard form. We use the symbols $\langle \cdot,\cdot\rangle_q$ and $\norm{\cdot}_{q}$ respectively to denote the inner product and two-norm of elements of the GNS Hilbert space.

The modular theory of $\Gamma_{q}(\CH_{\R}, U_{t})^{\prime\prime}$ associated to $\varphi_{q,U}$ is as follows. Let $J_{\varphi_{q,U}}$ and $\Delta_{\varphi_{q,U}}$ respectively denote the modular conjugation and modular operator associated to $\varphi_{q,U}$ and let $S_{\varphi_{q,U}}=J_{\varphi_{q,U}}\Delta_{\varphi_{q,U}}^{\frac{1}{2}}$. The domain of $S_{\varphi_{q,U}}$ $($and hence of $\Delta_{\varphi_{q,U}}^{\frac{1}{2}})$ contains the complex span of simple tensors of all orders of vectors from $\CH_\R$, and, the operators $J_{\varphi_{q,U}}$ and $\Delta_{\varphi_{q,U}}$ depend on $A$. More precisely, for $n\in\N$, 
\begin{align}\label{modulartheory}
&J_{\varphi_{q,U}}(\xi_1 \otimes \cdots \otimes \xi_n) = A^{-1/2}\xi_n \otimes \cdots \otimes A^{-1/2}\xi_1, \text{ }\forall\text{ } \xi_{i}\in \mathcal{H}_{\mathbb{R}}\cap \mathfrak{D}(A^{-\half});\\
\nonumber&\Delta_{\varphi_{q,U}}(\xi_1 \otimes\cdots \otimes \xi_n ) =
A^{-1}\xi_1 \otimes \cdots\otimes A^{-1}\xi_n, \text{ }\forall\text{ } \xi_{i}\in \mathcal{H}_{\mathbb{R}}\cap \mathfrak{D}(A^{-1});\\
\nonumber&S_{\varphi_{q,U}}(\xi_1 \otimes \cdots \otimes \xi_n) =\xi_n \otimes \cdots \otimes \xi_1, \text{ }\forall\text{ } \xi_{i}\in \mathcal{H}_{\mathbb{R}}.
\end{align}
The modular automorphism group $(\sigma_{t}^{\varphi_{q,U}})$ of $\varphi_{q,U}$ is given by $\sigma_{-t}^{\varphi_{q,U}}=\text{Ad}(\CF(U_{t}))$, where $\CF(U_{t})=id\oplus \oplus_{n\geq 1} U_{t}^{\otimes_q n}$, for all $t\in \mathbb{R}$. In particular,
\begin{align}\label{modularaut}
\sigma^{\varphi_{q,U}}_{-t}(s_q(\xi)) = s_q(U_{t}\xi), \text{ for all } \xi \in \mathcal{H}_{\mathbb{R}}.
\end{align}

In this paper, the orthogonal representation remains arbitrary but fixed. Thus, to reduce notation, we will write $M_q=\Gamma_q(\CH_\R, U_t)^{\prime\prime}$ and $\varphi=\varphi_{q,U}$. We will also denote $J_{\varphi_{q,U}}$ by $J$ and $\Delta_{\varphi_{q,U}}$ by $\Delta$. 

We say that a vector $\xi\in \CH_\R$ is \textit{analytic}, if $s_{q}(\xi)$ is analytic for $(\sigma_t^{\varphi})$.

\begin{Lemma}$($\cite[Lemma 3.1]{BM17}$)$\label{VectorinMq}
The vector $\xi_{1}\otimes\cdots\otimes \xi_{n}\in M_q\Omega $ for any $\xi_i\in\CH_\R$, $1\leq i\leq n$ and $n\in \mathbb{N}$.
\end{Lemma}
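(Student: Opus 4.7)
The plan is to proceed by induction on $n$, using the action formulas \eqref{Leftmult} for $c_q(\xi)$ and $c_q(\xi)^*$ on the Fock space. The base cases are immediate: for $n=0$ the statement reduces to $\Omega \in M_q\Omega$, and for $n=1$ it is just $s_q(\xi_1)\Omega = \xi_1$ since $c_q(\xi_1)^*\Omega = 0$. Before starting, one needs the boundedness of $s_q(\xi)$ for $\xi \in \CH_\R$ so that it is an honest element of $M_q$ (rather than merely affiliated); this follows from the standard $q$-Fock bound $\|c_q(\xi)\|\leq \|\xi\|_U/\sqrt{1-|q|}$ and the fact recorded just after \eqref{qFock} that $\CH_\R\hookrightarrow(\CH,\|\cdot\|_U)$ is an isometric embedding, so every $s_q(\xi)$ with $\xi\in\CH_\R$ is a bounded self-adjoint element of $M_q$.

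For the inductive step I fix $n\geq 2$ and assume the statement for all lengths strictly less than $n$. Applying $s_q(\xi_1)$ to the length-$(n-1)$ simple tensor $\xi_2\otimes\cdots\otimes\xi_n$ and using \eqref{Leftmult} gives
\begin{align*}
s_q(\xi_1)\bigl(\xi_2\otimes\cdots\otimes\xi_n\bigr)
&= \xi_1\otimes\xi_2\otimes\cdots\otimes\xi_n \\
&\quad + \sum_{i=2}^{n} q^{\,i-2}\,\langle \xi_1,\xi_i\rangle_U\,\xi_2\otimes\cdots\otimes\xi_{i-1}\otimes\xi_{i+1}\otimes\cdots\otimes\xi_n.
\end{align*}
By the induction hypothesis applied at level $n-1$, there exists $y\in M_q$ with $y\Omega = \xi_2\otimes\cdots\otimes\xi_n$, and applied at level $n-2$, each hat-omitted tensor in the sum above is of the form $z_i\Omega$ for some $z_i\in M_q$. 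Rearranging the displayed identity then yields
\begin{align*}
\xi_1\otimes\cdots\otimes\xi_n \;=\; \Bigl(s_q(\xi_1)\,y \;-\; \sum_{i=2}^{n} q^{\,i-2}\,\langle \xi_1,\xi_i\rangle_U\,z_i\Bigr)\Omega,
\end{align*}
which exhibits $\xi_1\otimes\cdots\otimes\xi_n$ as $x\Omega$ for the explicit element $x\in M_q$ inside the parentheses. This closes the induction.

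No step appears to be a real obstacle; the only point that is worth being careful about is that the coefficients $\langle\xi_1,\xi_i\rangle_U$ are genuine complex scalars (since the $U$-inner product on $\CH$ is complex, even when restricted to vectors from $\CH_\R$), so one must keep track of them as $\C$-scalars and not attempt to absorb them into real-linear combinations of generators. Apart from this bookkeeping, the proof reduces to a straightforward double induction driven by the Wick-type formula above.
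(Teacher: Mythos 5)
Your proof is correct and is essentially the argument behind the cited reference \cite[Lemma 3.1]{BM17} (the present paper only quotes the lemma without reproving it): one inducts on the length of the simple tensor, writes $s_q(\xi_1)=c_q(\xi_1)+c_q(\xi_1)^*$, applies the Wick-type formula \eqref{Leftmult} to the length-$(n-1)$ tensor supplied by the inductive hypothesis, and absorbs the lower-order annihilation terms using the hypothesis at level $n-2$. Your two side remarks --- the boundedness of $s_q(\xi)$ so that it is an honest element of $M_q$, and the fact that the coefficients $\langle\xi_1,\xi_i\rangle_U$ are genuinely complex scalars since $\langle\cdot,\cdot\rangle_U$ restricted to $\CH_\R$ need not be real-valued --- are both accurate and do not affect the argument.
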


Fix $\xi\in \CH_\R$ with $\norm{\xi}_{U}=1$. By Eq. $(1.2)$ of \cite{Hiai}, the moments of the operator $s_q(\xi)$ with respect to the $q$-quasi free state $\varphi(\cdot) = \lan \Omega, \cdot \Omega \ran_q$ are given by 
\begin{equation*}
\varphi(s_q(\xi)^n)=
   \begin{cases}
     0, & \text{if } n \text{ is odd}, \\
     \sum_{\CV = \{ \pi(r), \kappa(r) \}_{ 1 \leq r \leq \frac{n}{2} } }  q^{c(\CV)}, & \text{if }n\text{ is even},
  \end{cases}
\end{equation*}
where the summation is taken over all pair partitions $\CV = \{ \pi(r), \kappa(r) \}_{ 1 \leq r \leq \frac{n}{2} }$ of $\{1, 2, \cdots, n\}$ with $\pi(r)<\kappa(r) $ and $c(\CV)$ is the number of crossings of $\CV$, i.e., 
\begin{align*}
c(\CV) = \# \{ (r, s): \pi(r)< \pi(s) < \kappa(r) < \kappa(s) \}.
\end{align*}
The distribution of $s_q(\xi)$  does not depend on the group $(U_t )$. This distribution obeys the semicircular law $\nu_q$ which is absolutely continuous with respect to the uniform measure  supported on the interval $[-\frac{2}{\sqrt{ 1-q}}, \frac{2}{\sqrt{ 1-q}}]$; the associated orthogonal polynomials are $q$-Hermite polynomials $H_{n}^{q}$, $ n\geq 0$. Thus, $M_{\xi}=vN(s_q(\xi))$ is diffuse and $\{H_n^q(s_q(\xi))\Omega:n\geq 0\}$, is a total orthogonal set of vectors in $\overline{M_{\xi}\Omega}^{\norm{\cdot}_q}$. By convention, $\xi^{\otimes 0}=\Omega$. Note that $\overline{M_\xi\Omega}^{\norm{\cdot}_q}= \overline{\text{ span }\{\xi^{\otimes n}:n\geq 0}\}^{\norm{\cdot}_q}$, as $\xi^{\otimes n}=H_n^q(s_q(\xi))\Omega$ for all $n\geq 0$. 

It is to be noted that for $\xi\in \CH_\R$ with $\norm{\xi}_{U}=1$, there does not exist any $\varphi$-preserving faithful normal 
conditional expectation $($even appropriate operator valued weight$)$ on $M_\xi$ unless $U_t\xi=\xi$ for all $t\in \R$ $($see Thm. 4.2 \cite{BM17}$)$.

\section{Split Inclusions}\label{splitinclusionsection}

In this section, we study $($quasi$)$ split inclusions of von Neumann algebras that will be used as an auxiliary tool to investigate the $q$-deformed Araki-Woods von Neumann algebras. For inclusions of factors such study was carried out in \cite{BDL90} and quasi-split inclusions were studied in \cite{F02}. Interested readers are also referred to \cite{DL83}, where the authors discuss relations between `split property' and `quasi-innerness' of flip automorphisms on tensor products.  

In \cite{BDL90}, the authors work with factors, though \emph{some} of their proofs and results check out verbatim for general von Neumann algebras. For our purpose, we collect some of the results of \cite{BDL90, DL83, F02} that we need in the case when the von Neumann algebras \emph{may not} be factors. Thus, we do not claim originality of statements in this section. We need some preparation. 

Let a von Neumann algebra $M$ be represented in standard form on the GNS Hilbert space $\CH_\varphi:=L^2(M,\varphi)$ with respect to a faithful normal state $\varphi$. When there is no danger of confusion, we will write $L^2(M)$ for $L^2(M,\varphi)$ to simplify notation. Let $J_\varphi$, $\Delta_\varphi$ respectively denote the Tomita's conjugation and modular operators, and let $\Omega_\varphi$ denote the standard vacuum vector. The inner product and norm on $\CH_\varphi$ is denoted by $\langle\cdot,\cdot\rangle_\varphi$ and $\norm{\cdot}_{2,\varphi}$ respectively. Let $L^1(M):=M_*$ denote the predual of $M$. Let $M^{op}$ denote the opposite algebra of $M$ and $y^{op}:=J_\varphi y^* J_\varphi$ for all $y\in M$.

We have the following natural embeddings given by:
\begin{align}\label{embedding}
&\Phi_1: M\rightarrow L^1(M) \text{ by } \Phi_1(x)=\langle \cdot\Omega_\varphi, J_\varphi x\Omega_\varphi\rangle_\varphi, \text{ }x\in M;\\
&\nonumber\Phi_2:M\rightarrow L^2(M)\text{ by } \Phi_2(x)=\Delta_\varphi^{\frac{1}{4}}x\Omega_\varphi, \text{ }x\in M.
\end{align}
It is easily seen that $\Phi_p$, $p=1,2$, have norm $1$ and are $(w^*,w^*)$-continuous $($i.e., they are normal$)$. In fact, $\Phi_2$ is $(\sigma$-strong$^*,\norm{\cdot}_{2,\varphi})$-continuous. 

For $p=1,2$, let $\mathfrak{P}_\varphi^p\subseteq L^p(M)$ denote the standard cone of positives. Thus,  $\mathfrak{P}_\varphi^1=M_*^+$ and $\mathfrak{P}_\varphi^2=\overline{\Delta^{\frac{1}{4}}_\varphi M_+\Omega_\varphi}^{\norm{\cdot}_{2,\varphi}}$. For $n\in\N$, consider the faithful normal state $\varphi^{(n)}:=\varphi\otimes tr_n$ on $M^{(n)}:=M\overline\otimes M_n(\C)$, where $tr_n$ is the normalized trace on $M_n(\C)$.  

\begin{Definition}\label{positivemap}
Let $N,M$ be von Neumann algebras and let $M$ be equipped with a faithful normal state $\varphi$.  For $p=1,2$, a linear map $\Theta_p: N\rightarrow L^p(M)$ is said to be completely positive if $(\Theta_p\otimes id)(\Big(N\overline{\otimes}M_n(\C)\Big)_+)\subseteq \mathfrak{P}_{\varphi^{(n)}}^p$.  
\end{Definition}

Then, $\Phi_p$, $p=1,2$, are completely positive $($c.p. in the sequel$)$. 
Indeed, let $1_n$ denote the identity operator on the Hilbert space $M_n(\C)$, the inner product on $M_n(\C)$ being induced by $tr_n$. Then, $\Delta_{\varphi^{(n)}}=\Delta_{\varphi}\otimes 1_n$. Elements of
$L^2(M^{(n)},\varphi^{(n)})$ will be denoted as $[\zeta_{ij}]$ with $\zeta_{ij}\in L^2(M)$ for all $1\leq i,j\leq n$. Therefore, 
\begin{align*}
\Delta_{\varphi^{(n)}}^{\frac{1}{4}}[x_{ij}]\Omega_{\varphi^{(n)}}&= [\Delta_{\varphi}^{\frac{1}{4}}x_{ij}\Omega_\varphi]\\
&=(\Phi_2\otimes id)[x_{ij}], \text{ }[x_{ij}]\in M^{(n)}. 
\end{align*}
It readily follows that, if $0\leq [x_{ij}]\in M^{(n)}$, then $(\Phi_2\otimes id)[x_{ij}]\in \mathfrak{P}_{\varphi^{(n)}}^2$. 

Let $\{e_{i,j}:1\leq i,j\leq n\}$ denote the standard matrix units of $M_n(\C)$. Again, let $[a_i^*a_j], [x_i^*x_j]\in M^{(n)}$, where $a_i,x_i\in M$ for $1\leq i\leq n$. Then, $[a_i^*a_j], [x_i^*x_j]\geq 0$. Therefore, $[\langle \cdot\Omega_\varphi, J_\varphi x_i^*x_j\Omega_\varphi\rangle_\varphi]\in \Big(M\overline\otimes M_n(\C)\Big)_*$, and, 
\begin{align*}
\Big([\langle \cdot\Omega_\varphi, J_\varphi x_i^*x_j\Omega_\varphi\rangle_\varphi]\Big)([a_i^*a_j])&=\Big(\sum_{i,j}\langle \cdot\Omega_\varphi, J_\varphi x_i^*x_j\Omega_\varphi\rangle_\varphi\otimes e_{i,j}\Big)\Big(\sum_{l,m}a_l^*a_m\otimes e_{l,m}\Big)\\
&=\sum_{i,j}\sum_{l,m}\Big(\langle \cdot\Omega_\varphi, J_\varphi x_i^*x_j\Omega_\varphi\rangle_\varphi\otimes e_{i,j}\Big)(a_l^*a_m\otimes e_{l,m})\\
&=\sum_{i,j}\sum_{l,m} \langle a_l^*a_m\Omega_\varphi, J_\varphi x_i^*x_j\Omega_\varphi\rangle_\varphi e_{i,j}(e_{l,m})\\
&=\sum_{i,j}\sum_{l,m} \langle a_l^*a_m\Omega_\varphi, J_\varphi x_i^*x_j\Omega_\varphi\rangle_\varphi tr_n(e_{i,j}e_{l,m}) \\
&\indent\indent\indent\indent\indent\indent\indent\indent\indent\indent\indent\Big(e_{i,j}\mapsto \langle\cdot, J_{tr_n}e_{i,j}\Omega_{tr_n}\rangle_{tr_n}\Big) \\
&=\sum_{l,m} \langle a_l^*a_m\Omega_\varphi, J_\varphi x_m^*x_l\Omega_\varphi\rangle_\varphi \\
&=\sum_{l,m} \langle a_l^*a_m\Omega_\varphi, J_\varphi x_m^*J_\varphi  J_\varphi  x_l J_\varphi \Omega_\varphi\rangle_\varphi \\
&=\sum_{l,m} \langle J_\varphi x_m J_\varphi  a_m\Omega_\varphi,   J_\varphi  x_l J_\varphi a_l \Omega_\varphi\rangle_\varphi \\
&= \norm{\sum_l J_\varphi  x_l J_\varphi a_l \Omega_\varphi}^2_{2,\varphi}\geq 0.
\end{align*}
Since $(M\overline\otimes M_n(\C))_+$ is the sum of $n$ positive elements of the form $[x_i^*x_j]$ with $x_i\in M$ for $1\leq i\leq n$ \cite[Lemma 3.13]{Pa03}, it follows that $(\Phi_1\otimes id)\Big((M\overline\otimes M_n(\C))_+\Big)\subseteq \mathfrak{P}_{\varphi^{(n)}}^1$.

All throughout this section, $B\subseteq M$ will be a unital von Neumann subalgebra of $M$. 

\begin{Definition}\label{split}
\noindent
\begin{enumerate}
\item The inclusion $B\subseteq M$ is said to be \emph{split}, if there exists a type $\rm{I}$ factor $F$ such that $B\subseteq F\subseteq M$. 
\item The inclusion $B\subseteq M$ is said to be \emph{quasi-split}, if the map 
\begin{align*}
B\otimes_{alg} M^{op}\ni b\otimes y^{op}\mapsto aJ_\varphi y^*J_\varphi\in \mathbf{B}(\mathcal{H}_\varphi),
\end{align*}
extends to a normal $*$-homomorphism $\eta$ of $B\overline\otimes M^{op}$ $($acting on $\mathcal{H}_\varphi\otimes \mathcal{H}_\varphi)$ onto $B\vee M^\prime$. 
\end{enumerate}
\end{Definition} 
By the uniqueness of standard form, it follows that the quasi-split property is really a property of the inclusion. 

Even in the simplest case, the homomorphism $\eta$ will fail to be an isomorphism. For example, consider $A=C(\{0,1\})\simeq \C^2$ acting on $\C^2$ as multiplication operators. Obviously, $A^\prime = A=A^{op}$. The homomorphism associated to the inclusion $A\subseteq A$ is $\eta: A\otimes A^{op}\rightarrow A$ given by $\eta(f)(t) = f(t,t)$, $f\in A\otimes A^{op}$ and $t \in \{0,1\}$. Then $Ker(\eta)$ is non-trivial, so $\eta$ is not an isomorphism and the inclusion $A\subseteq A$ is quasi-split.

\begin{Definition}\cite{BDL90}\label{extendable}
Let $N$ and $M$ be von Neumann algebras and let $p=1,2$. A normal c.p. map $\Phi_p: N\rightarrow L^{p}(M)$, is said to be extendable, if for any von Neumann algebra $\widetilde{N}$ with separable predual containing $N$, there exists a normal c.p. map $\widetilde{\Phi}_p :\widetilde{N}\rightarrow L^p(M)$, which extends $\Phi_p$. 
\end{Definition}

\begin{Proposition}\cite[Prop. 2]{F02}\label{Extend1and2equivalent}
Let $B\subseteq M$ be an inclusion of von Neumann algebras, where $M$ is represented in standard form on the GNS Hilbert space $\CH_\varphi$ with respect to a faithful
normal state $\varphi$. Then the following are equivalent.
\begin{enumerate}
\item ${\Phi_1}_{\upharpoonleft B}: B\rightarrow L^1(M)$ is extendable.
\item  ${\Phi_2}_{\upharpoonleft B}: B\rightarrow L^2(M)$ is extendable.
\item $B\subseteq M$ is a quasi-split inclusion.\footnote{The term `\emph{quasi-split}' has also been used by Doplicher and Longo in the same context to mean that $\eta$ is a $*$-isomorphism, which implies but need not be implied by $(1)$ or $(2)$ of Prop. \ref{Extend1and2equivalent} \cite[Defn. 1.4]{DoL84}. Also see \cite[Thm. 1]{DL83}.}
\end{enumerate}
\end{Proposition}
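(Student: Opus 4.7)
The plan is to prove the three conditions equivalent through a cyclic chain of implications, $(3) \Rightarrow (2) \Rightarrow (1) \Rightarrow (3)$, following the scheme laid out in \cite{BDL90} (for the factor case) with the modifications of \cite{F02} required for general von Neumann algebras. The underlying machinery is a combination of Arveson's extension theorem, Stinespring/KSGNS dilation, and Araki's theory of the standard cones $\mathfrak{P}_\varphi^1, \mathfrak{P}_\varphi^2$ inside $L^1(M), L^2(M)$.

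To prove $(3) \Rightarrow (2)$, I would use the normal $*$-homomorphism $\eta$ to extend ${\Phi_2}_{\upharpoonleft B}$ to any given $\widetilde N \supseteq B$ with separable predual. The strategy is to consider the inclusion $B \overline\otimes M^{op} \subseteq \widetilde N \overline\otimes M^{op}$ and to extend $\eta$, viewed as a c.p. map into $\mathbf{B}(\CH_\varphi)$, by Arveson's theorem to a normal c.p. map $\widetilde\eta$ on $\widetilde N \overline\otimes M^{op}$. Pairing $\widetilde\eta$ with the vacuum $\Omega_\varphi$ and inserting a modular correction via $\Delta_\varphi^{1/4}$ yields the candidate $\widetilde\Phi_2$; the identity $\Delta_\varphi^{1/4}\Omega_\varphi = \Omega_\varphi$ ensures agreement with $\Phi_2$ on $B$ and the construction of the standard cone $\mathfrak{P}_\varphi^2$ ensures that positives go to positives. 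For $(2) \Rightarrow (1)$, I would appeal to Araki's homeomorphism $v : \mathfrak{P}_\varphi^2 \to \mathfrak{P}_\varphi^1$ sending $\xi$ to the normal positive functional $\langle \cdot\,\xi, \xi\rangle$, which identifies vectors in the standard cone with their induced states. Although $v$ is not linear, extendability of c.p. maps valued in $\mathfrak{P}_\varphi^2$ transfers to extendability of c.p. maps valued in $\mathfrak{P}_\varphi^1$ via the KSGNS dilations of the two maps, using the explicit relation between $\Phi_1$ and $\Phi_2$ on $B$, both of which are built from the same vacuum vector.

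The crucial implication is $(1) \Rightarrow (3)$. Taking $\widetilde N = B \vee M'$ inside $\mathbf{B}(\CH_\varphi)$, which has separable predual under the paper's standing hypothesis, we extend ${\Phi_1}_{\upharpoonleft B}$ to a normal c.p. map $\widetilde\Phi_1 : B \vee M' \to L^1(M)$. From the explicit form $\Phi_1(x)(\cdot) = \langle \cdot\,\Omega_\varphi, J_\varphi x \Omega_\varphi\rangle$ and normality, one obtains a separately normal, completely positive bilinear map $B \times M^{op} \to L^1(M)$ sending $(b, y^{op})$ to $\widetilde\Phi_1(b \cdot J_\varphi y^* J_\varphi)$. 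Standard arguments lift this to a normal c.p. map on $B \overline\otimes M^{op}$ and the desired $\eta$ is then extracted via Stinespring. The main obstacle here is the upgrade from a c.p. extension to a genuine $*$-homomorphism: one must exploit the fact that $B$ and $M'$ commute inside $\mathbf{B}(\CH_\varphi)$ to rule out any multiplicative cocycle obstruction in the Stinespring dilation, so that the extension is automatically multiplicative on the algebraic tensor product and extends normally to the von Neumann tensor product $B \overline\otimes M^{op}$ with range in $B \vee M'$.
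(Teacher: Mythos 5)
This proposition is quoted from \cite[Prop. 2]{F02} and the paper supplies no proof of its own, so your attempt can only be measured against the argument in the literature (\cite{BDL90, DL83, F02}). Your directions $(3)\Rightarrow(2)\Rightarrow(1)$ are roughly in the right spirit, though over-engineered: for $(2)\Rightarrow(1)$ the factorization $\Phi_1=\Phi_2^\prime\circ\Phi_2$ recorded in the proof of Prop.~\ref{nuclearimplysplit} lets you simply post-compose any extension of ${\Phi_2}_{\upharpoonleft B}$ with the transpose $\Phi_2^\prime$, which maps $\mathfrak{P}^2_\varphi$ into $M_*^+$ because the natural cone is self-dual and $J_\varphi$-invariant; no KSGNS dilation is needed. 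In $(3)\Rightarrow(2)$ your candidate $\widetilde\Phi_2(n)=\Delta_\varphi^{1/4}\,\widetilde\eta(n\otimes 1)\Omega_\varphi$ has an unaddressed domain problem: for $n\notin B$ the vector $\widetilde\eta(n\otimes 1)\Omega_\varphi$ is merely some element of $\CH_\varphi$ and need not lie in $\mathfrak{D}(\Delta_\varphi^{1/4})$, so the ``modular correction'' is not defined as written.

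The genuine gap is in $(1)\Rightarrow(3)$. First, you have misidentified the obstacle: since $B$ and $J_\varphi MJ_\varphi=M^\prime$ commute, the map $\eta_0:b\otimes y^{op}\mapsto bJ_\varphi y^*J_\varphi$ is \emph{already} a $*$-homomorphism on $B\otimes_{alg}M^{op}$, so there is no ``multiplicative cocycle obstruction'' to rule out. The entire content of quasi-splitness is \emph{normality} of $\eta_0$ on $B\overline\otimes M^{op}$, which (by cyclicity of $\Omega_\varphi$ for $M^\prime$, hence for $B\vee M^\prime$) reduces to normality of the vector state $\omega_0=\langle\Omega_\varphi,\eta_0(\cdot)\Omega_\varphi\rangle_\varphi$ on $B\overline\otimes M^{op}$. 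Second, your key step --- that a separately normal completely positive bilinear map on $B\times M^{op}$ lifts ``by standard arguments'' to a normal c.p.\ map on $B\overline\otimes M^{op}$ --- is false as a general principle: the form $(b,y^{op})\mapsto\langle\Omega_\varphi,bJ_\varphi y^*J_\varphi\Omega_\varphi\rangle_\varphi$ is separately normal for \emph{every} inclusion, so if separate normality implied normality on the $W^*$-tensor product, every inclusion would be quasi-split and the proposition would be vacuous. This binormal-versus-normal distinction is exactly what the extendability hypothesis must be used to overcome, and the choice $\widetilde N=B\vee M^\prime$ does not accomplish this, since that algebra is in general not of type $\rm{I}$ and the extension to it carries no new tensor-product information. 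The correct move (as in \cite{BDL90}) is to take $\widetilde N=\mathbf{B}(\CH_\varphi)$, which has separable predual because $\CH_\varphi$ is separable: the extension $\widetilde\Phi_1:\mathbf{B}(\CH_\varphi)\rightarrow L^1(M)$ induces a positive binormal form on $\mathbf{B}(\CH_\varphi)\times M^{op}$, and the type $\rm{I}$ (semidiscrete) structure of $\mathbf{B}(\CH_\varphi)$ is what guarantees that this form comes from a normal functional on $\mathbf{B}(\CH_\varphi)\overline\otimes M^{op}$; restricting to $B\overline\otimes M^{op}$ then gives normality of $\omega_0$ and hence of $\eta_0$.
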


\begin{Remark}
The abstract characterization of quasi-split inclusions above is practically impossible to verify in most situations. Thus, to check if an inclusion is quasi-split, one needs more applicable conditions.
\end{Remark}

\begin{Definition}\label{nuclearmaps}
Let $X$ and $Y$ be two Banach spaces and let $ \Psi : X \rightarrow Y$ be a bounded linear map. Then, $\Psi$ is called a \emph{nuclear} map if and only if there exist sequences $x_n^* \in X^* $ and $y_n \in Y$ such that $\sum_{n=1}^\infty  \norm{x_n^*} \norm{y_n} < \infty $ and 
\begin{align*}
\Psi(x) = \sum_{n=1}^\infty x_n^*(x)y_n, \text{ for all }  x \in X. 
\end{align*}
\end{Definition}
Clearly, the collection of nuclear maps from $X$ to $Y$ form a subspace of $\textbf{B}(X,Y)$ which can be identified with the projective tensor product $X^*\hat\otimes Y$. Thus, these maps inherit a natural norm $\norm{\cdot}_n$, called the nuclear norm, and $\norm{\Psi}_n=\inf\{\sum_{n=1}^\infty  \norm{x_n^*} \norm{y_n} : \Psi(x) = \sum_{n=1}^\infty x_n^*(x)y_n,  \text{ for all }  x \in X\}$. Moreover, when $X$ in Defn. \ref{nuclearmaps} is a von Neumann algebra and $\Psi$ is continuous with respect to the strong$^*$ topology on $X$ and weak topology on $Y$, then one can ensure a choice to represent $\Psi$ such that $x_n^*\in X_{*}$ for all $n$. Note that nuclear maps between appropriate Banach spaces have very close connections with quasi-split inclusions. 

The next result is a combination of statements from \cite{BDL90} and \cite{F02}. However, we provide a rough sketch of the proof for the sake of convenience. 

\begin{Proposition}\label{nuclearimplysplit}
Let $B\subseteq M$ be an inclusion of von Neumann algebras, where $M$ is represented in standard form on the GNS Hilbert space $\CH_\varphi$ with respect to a faithful
normal state $\varphi$. Then, 
\begin{align*}
{\Phi_p}_{\upharpoonleft B} \text{ is nuclear } \text{ }\Rightarrow \text{ }&{\Phi_p}_{\upharpoonleft B} \text{ is extendable }\text{ }\Leftrightarrow B\subseteq M \text{ is quasi-split}, \text{ }p=1,2.
\end{align*}
\end{Proposition}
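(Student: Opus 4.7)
The equivalence between extendability of $\Phi_p|_B$ and the quasi-split property is exactly Proposition \ref{Extend1and2equivalent}, which I take as given. The only new content is therefore the implication that if $\Phi_p|_B$ is nuclear then it is extendable, and my plan addresses this.

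First I would produce a bounded normal linear extension. Using normality of $\Phi_p|_B$ together with the remark following Definition \ref{nuclearmaps}, I fix a nuclear representation $\Phi_p|_B(b) = \sum_{n\geq 1} \omega_n(b)\zeta_n$ with $\omega_n \in B_*$, $\zeta_n \in L^p(M)$, and $\sum_n \|\omega_n\|\|\zeta_n\|<\infty$. For any von Neumann algebra $\tilde N$ containing $B$, the $w^*$-continuous inclusion $B\hookrightarrow \tilde N$ yields a quotient of preduals $\tilde N_* \twoheadrightarrow B_*$ (identifying $B_* = \tilde N_*/B_\perp$), so each $\omega_n$ admits a normal Hahn-Banach extension $\tilde\omega_n \in \tilde N_*$ whose norm can be made arbitrarily close to $\|\omega_n\|$; with appropriate slack I arrange $\sum_n \|\tilde\omega_n\|\|\zeta_n\|<\infty$. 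Then $\tilde\Phi_p(a) := \sum_n \tilde\omega_n(a)\zeta_n$ converges absolutely in $L^p(M)$ and gives a normal bounded linear extension of $\Phi_p|_B$ to $\tilde N$.

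The real content is complete positivity, since raw Hahn-Banach extensions of the $\omega_n$ are blind to the c.p. structure of $\Phi_p|_B$. Here I would replace the plain nuclear representation by an approximate c.p. factorization through finite-dimensional matrix algebras: using that $\Phi_p|_B$ is simultaneously c.p. and nuclear, one constructs normal c.p. maps $\alpha_k : B \to M_{n_k}(\C)$ and c.p. maps $\beta_k : M_{n_k}(\C) \to L^p(M)$ such that $\beta_k \circ \alpha_k \to \Phi_p|_B$ pointwise in $\|\cdot\|_{L^p}$. This amounts to combining finite-rank truncations of the nuclear series with Stinespring-type dilations of the truncated pieces to promote them to c.p. form, and is the main technical step and the principal obstacle of the argument.

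Given such a factorization, the extension is essentially automatic. Arveson's extension theorem lifts each $\alpha_k$ to a normal c.p. map $\tilde\alpha_k : \tilde N \to M_{n_k}(\C)$ (matrix algebras are injective), so the uniformly bounded family $\tilde\Psi_k := \beta_k \circ \tilde\alpha_k : \tilde N \to L^p(M)$ consists of normal c.p. maps agreeing asymptotically with $\Phi_p|_B$ on $B$. By separability of preduals together with weak closure of the c.p. cone, this family has a pointwise weak cluster value $\tilde\Phi_p$, which is a normal c.p. extension of $\Phi_p|_B$ to $\tilde N$, as desired.
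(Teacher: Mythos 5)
Your reduction of the problem is right: the equivalence of extendability with the quasi-split property is Prop.~\ref{Extend1and2equivalent}, and the only content to supply is ``nuclear $\Rightarrow$ extendable.'' But your proof of that implication has a genuine gap at exactly the point you flag as ``the main technical step.'' You assert that a map which is simultaneously c.p.\ and Banach-space nuclear admits an approximate factorization $\beta_k\circ\alpha_k\to {\Phi_p}_{\upharpoonleft B}$ through matrix algebras with \emph{both} legs completely positive, to be obtained by ``combining finite-rank truncations of the nuclear series with Stinespring-type dilations of the truncated pieces to promote them to c.p.\ form.'' This does not work as described: the partial sums $\sum_{n\le N}\omega_n(\cdot)\zeta_n$ of a nuclear expansion of a c.p.\ map are in general not c.p., nor close to c.p., and there is no canonical procedure that converts a non-c.p.\ finite-rank map into a c.p.\ one while retaining the approximation; Stinespring dilation applies to maps that are already c.p. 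So the existence of a c.p.\ factorization through matrix algebras is an unproven (and, in this generality, doubtful) claim, not a consequence of the hypotheses. A secondary issue is the final cluster-point step: for $p=1$ the target $L^1(M)=M_*$ is not a dual space, so bounded nets need not have pointwise weak cluster points in $M_*$, and in either case a pointwise weak limit of normal maps need not be normal.

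The paper avoids all of this by a different and much shorter route. It first observes the factorization $\Phi_1=\Phi_2^\prime\circ\Phi_2$ through the transpose $\Phi_2^\prime:L^2(M)\to L^1(M)$, so nuclearity of ${\Phi_2}_{\upharpoonleft B}$ yields nuclearity of ${\Phi_1}_{\upharpoonleft B}$; it then invokes the first part of the proof of \cite[Prop.~2.3]{BDL90}, where a normal c.p.\ nuclear map $B\to L^1(M)$ is identified, via the embedding $B_*\hat\otimes M_*\subseteq (B\overline\otimes M^{op})_*$, with a normal positive functional on $B\overline\otimes M^{op}$; such a functional extends to a normal positive functional on $\widetilde{N}\overline\otimes M^{op}$, and slicing the extension gives the extended c.p.\ map. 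Extendability of ${\Phi_1}_{\upharpoonleft B}$ and of ${\Phi_2}_{\upharpoonleft B}$ are then equivalent by Prop.~\ref{Extend1and2equivalent}. If you want to repair your argument, the functional-extension mechanism of \cite{BDL90} is the step to import in place of the matrix-algebra factorization.
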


\begin{proof}
Let $\Phi_2^\prime: L^2(M)\rightarrow L^1(M)$ denote the transpose of $\Phi_2$, i.e., $\Phi_2^\prime(\xi) =\langle \Phi_2(\cdot), J_\varphi\xi\rangle_{2,\varphi}$, $\xi\in L^2(M)$. We have the following commutative diagram. 
\begin{center}
\begin{tikzcd}
M \arrow[rr, "\Phi_1"] \arrow[rd, "\Phi_2"] &  & L^1(M) \\
 & L^2(M) \arrow[ru, "\Phi_2^\prime"] &       
\end{tikzcd}
\end{center}
Indeed, for $x,y\in M$, one has
\begin{align*}
\Big((\Phi_2^\prime\circ\Phi_2)(x)\Big)(y) &=\Big(\Phi_2^\prime(\Delta_\varphi^{\frac{1}{4}}x\Omega_\varphi)\Big)(y)\\
&=\langle \Delta_\varphi^{\frac{1}{4}}y\Omega_\varphi, J_\varphi\Delta_\varphi^{\frac{1}{4}} x\Omega_\varphi\rangle_{2,\varphi}\\
&=\langle \Delta_\varphi^{\frac{1}{4}}y\Omega_\varphi, \Delta_\varphi^{-\frac{1}{4}} J_\varphi x\Omega_\varphi\rangle_{2,\varphi}\\
&=\langle y\Omega_\varphi, J_\varphi x\Omega_\varphi\rangle_{2,\varphi}\\
&=\Big(\Phi_1(x)\Big)(y).
\end{align*}
\indent Now suppose that ${\Phi_2}_{\upharpoonleft B}$ is nuclear. Then, ${\Phi_1}_{\upharpoonleft B}$ is also nuclear. Then, the first part of the proof of \cite[Prop. 2.3]{BDL90} entails that ${\Phi_1}_{\upharpoonleft B}$ is extendable. Thus, ${\Phi_2}_{\upharpoonleft B}$ is also extendable and this is equivalent to the inclusion 
$B\subseteq M$ being quasi-split by Prop. \ref{Extend1and2equivalent}.
\end{proof}

\begin{Proposition}\label{Factorimplysplit}
Let $B$ be a unital von Neumann subalgebra of a factor $M$, where $M$ is represented in standard form on the GNS Hilbert space $\CH_\varphi$ with respect to a faithful normal state $\varphi$. If the inclusion $B\subseteq M$ is quasi-split, then there exists a type $\rm{I}$ factor $F$ such that $B\otimes 1\subseteq F \subseteq M\overline\otimes \mathbf{B}(\mathcal{K})$, with $\dim(\mathcal{K})=\aleph_0$.  Further, if $M$ is a type $\rm{III}$ factor, then $B\subseteq M$ is a split inclusion. 
\end{Proposition}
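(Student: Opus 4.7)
The plan is to use the quasi-split hypothesis to extract a normal $*$-isomorphism, then to implement it spatially (after amplification, if necessary) in order to invoke a Doplicher-Longo style argument producing an intermediate type $\rm{I}$ factor.

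First, I will argue that $\eta : B \overline\otimes M^{op} \to B \vee M'$ is injective, hence a normal $*$-isomorphism. The kernel is a weakly closed two-sided ideal of $B \overline\otimes M^{op}$, so $\ker \eta = (1-z)(B \overline\otimes M^{op})$ for some central projection $z$. Since $M^{op}$ is a factor, $Z(B \overline\otimes M^{op}) = Z(B) \otimes 1$, whence $z = z_0 \otimes 1$ with $z_0 \in Z(B)$. As $\eta(b \otimes 1) = b$, the restriction of $\eta$ to $B \otimes 1$ is the identity and is therefore faithful; this forces $(1-z_0)b = 0$ for every $b \in B$, hence $z_0 = 1$, so $\eta$ is injective.

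For the first claim, I will amplify to the inclusion $B \otimes 1 \subseteq M \overline\otimes \mathbf{B}(\CK)$ on $\CH_\varphi \otimes \CK$. This amplified inclusion is again quasi-split, since the extendability of ${\Phi_2}_{\upharpoonleft B}$ that characterizes the quasi-split property in Proposition~\ref{Extend1and2equivalent} is preserved by tensoring with $\mathbf{B}(\CK)$; moreover, the ambient algebra $\tilde M := M \overline\otimes \mathbf{B}(\CK)$ is properly infinite. Proper infiniteness, combined with the abstract $*$-isomorphism from the first step, will yield a vector in the amplified Hilbert space that is simultaneously cyclic and separating for $(B \otimes 1) \vee \tilde M'$; the amplified homomorphism $\tilde \eta$ is then implemented by a unitary, and the standard Doplicher-Longo argument extracts a type $\rm{I}$ intermediate factor $F$. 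For the second claim, when $M$ is type $\rm{III}$, both $M$ and $M'$ are already properly infinite, so no amplification is needed: $\Omega_\varphi$ is automatically cyclic for $M' \subseteq B \vee M'$, and cyclicity of the commutant $B' \cap M = (B \vee M')'$ on $\CH_\varphi$ can be extracted from the type $\rm{III}$ structure of $M$ together with the rigidity provided by $\eta$ (which abstractly identifies $B \vee M'$ with $B \overline\otimes M^{op}$ and its commutant with $B^{op} \overline\otimes M$). The Doplicher-Longo argument then produces $F$ directly between $B$ and $M$.

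The principal obstacle is the construction of the joint cyclic and separating vector for $B \vee M'$ (or its amplified analogue), which upgrades the abstract $*$-isomorphism $\eta$ to a spatial one. In the amplified setting, the extra $\mathbf{B}(\CK)$-factor provides enough room to guarantee existence via proper infiniteness of $\tilde M$ in standard form; in the type $\rm{III}$ case one must instead exploit the proper infiniteness of both $M$ and $M'$, together with partial isometries in $M'$ realizing an absorption of $\mathbf{B}(\CK)$, to carry out the analogous construction directly on $\CH_\varphi$.
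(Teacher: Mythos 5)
Your core step coincides with the paper's: both proofs show that $\Ker(\eta)$ is of the form $(B\overline\otimes M^{op})p$ with $p=p_0\otimes 1$ coming from $\mathcal{Z}(B)\overline\otimes\C 1$ (using factoriality of $M^{op}$ and the commutation theorem), and then force $p_0=0$. Your way of forcing $p_0=0$ --- evaluating $\eta$ on $B\otimes 1$, where it acts as the identity, so that $(1-z_0)\otimes 1\in\Ker(\eta)$ gives $1-z_0=\eta((1-z_0)\otimes 1)=0$ --- is actually a touch more elementary than the paper's, which instead invokes injectivity of the algebraic-tensor-product map $\eta_0$ via \cite[Prop.~1.20.5]{Sa}; both are fine. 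Where the two arguments diverge is afterwards: the paper simply cites \cite[Thm.~1, Cor.~1]{DL83} to pass from ``$\eta$ is a $*$-isomorphism'' to the split conclusions, whereas you attempt to re-derive that machinery, and your sketch there is not quite right as stated. In your amplification $\tilde M=M\overline\otimes\mathbf{B}(\CK)$ on $\CH_\varphi\otimes\CK$ one has $\tilde M'=M'\overline\otimes\C 1$, so $(B\otimes 1)\vee\tilde M'=(B\vee M')\overline\otimes\C 1$, which is \emph{not} properly infinite unless $B\vee M'$ already is; proper infiniteness of $\tilde M$ alone therefore does not hand you a cyclic and separating vector for $(B\otimes 1)\vee\tilde M'$. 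The mechanism that actually makes the Doplicher--Longo step work is that two normal representations of a von Neumann algebra with separable preduals become unitarily equivalent after amplification by infinite multiplicity (this is why $\mathbf{B}(\CK)$ with $\dim\CK=\aleph_0$ appears in the statement), and, in the type $\rm{III}$ case, that a $*$-isomorphism between type $\rm{III}$ von Neumann algebras on separable Hilbert spaces is automatically spatial $($\cite[Cor.~8.13]{StZs}, as used in Rem.~\ref{Quasisplitingeneral}$)$ --- here $B\overline\otimes M^{op}$ and hence $B\vee M'$ are type $\rm{III}$ because $M^{op}$ is a type $\rm{III}$ factor. If you replace your cyclic-and-separating-vector heuristic by these two standard facts (or simply cite \cite[Thm.~1, Cor.~1]{DL83} as the paper does), your argument is complete.
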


\begin{proof}
We are given that $B\otimes_{alg} M^{op}\ni b\otimes y^{op}\overset{\eta_0}\mapsto bJ_\varphi y^* J_\varphi\in B\vee M^{\prime}\subseteq \mathbf{B}(\mathcal{H}_\varphi)$ extends to a surjective normal $*$-homomorphism $\eta: B\overline\otimes M^{op}\subseteq \mathbf{B}(\mathcal{H}_\varphi)\overline\otimes  \mathbf{B}(\mathcal{H}_\varphi)\rightarrow B\vee M^{\prime}\subseteq  \mathbf{B}(\mathcal{H}_\varphi)$. Then, $Ker(\eta)$ is a two-sided weakly closed ideal in $B\overline\otimes M^{op}$. Thus, there exists a projection $p\in \mathcal{Z}(B\overline\otimes M^{op})$ such that $Ker(\eta)=(B\overline\otimes M^{op})p$. Since $M$ is a factor, so is $M^{op}$. By Tomita's theorem on commutants, it follows that $\mathcal{Z}(B\overline\otimes M^{op})=\mathcal{Z}(B)\overline\otimes \C1_{M^{op}}$. Consequently, there exists a projection $p_0\in \mathcal{Z}(B)$ such that $p=p_0\otimes 1_{M^{op}}$. 

Therefore, $\eta(p_0\otimes 1_{M^{op}})=0$ and hence $\eta_0(p_0\otimes 1_{M^{op}})=0$. However, since $M$ is a factor, $\eta_0$ is injective by a well known result $($see \cite[Prop. 1.20.5]{Sa}$)$. This contradiction forces that $p_0=0$, forcing $\eta$ to be a $*$-isomorphism. Thus, $B\otimes 1\subseteq M\overline\otimes \mathbf{B}(\mathcal{K})$ is a split inclusion by \cite[Thm. 1]{DL83}. 

If $M$ is a type $\rm{III}$ factor, then the result follows directly from \cite[Thm. 1, Cor. 1]{DL83}. 
\end{proof}

Again, by Tomita's fundamental theorem on commutants $($see \cite[pp. 258]{StZs}$)$ we have:

\begin{Lemma}\cite[Lemma 2]{DL83}\label{splitrepresentation}
Let $B\subseteq M\subseteq \textbf{B}(\mathcal{H}_\varphi)$ be von Neumann algebras. Then the following are equivalent.
\begin{enumerate}
\item  The inclusion $B\subseteq M$ is split. 
\item There exist Hilbert spaces $\CH_1$ and $\CH_2$ and faithful normal representations $\pi_B:B\rightarrow \textbf{B}(\CH_1)$ and $\pi_{M^\prime}:M^\prime\rightarrow \textbf{B}(\CH_2)$ such that $xy^\prime\mapsto \pi_B(x)\otimes\pi_{M^\prime}(y^\prime)$, $x\in B$ and $y^\prime\in M^\prime$, extends to a \textbf{spatial} isomorphism between $B\vee M^\prime$ and 
$\pi_B(B)\overline{\otimes}\pi_{M^\prime}(M^\prime)$. Moreover, $B^\prime\cap M\cong \pi_B(B)^\prime\overline{\otimes}(\pi_{M^\prime}(M^\prime))^\prime$. 
\end{enumerate}
Further, if $M$ is of type $\rm{III}$ and $B\subseteq M$ is a split inclusion, then $B^\prime\cap M$ is of type $\rm{III}$. 
\end{Lemma}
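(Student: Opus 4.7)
The plan is to establish the equivalence via the well-known correspondence between intermediate type $\rm{I}$ factors and tensor-product decompositions of $B\vee M'$; the commutant identification will then fall out of taking commutants across the spatial isomorphism, and the type $\rm{III}$ preservation from a standard argument about types under commutation and tensor products.

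For $(1)\Rightarrow (2)$, I would take a type $\rm{I}$ factor $F$ with $B\subseteq F\subseteq M$ and invoke the structure theory of type $\rm{I}$ factors on a separable Hilbert space to produce Hilbert spaces $\CH_1,\CH_2$ together with a unitary $U:\CH_\varphi\to \CH_1\otimes \CH_2$ satisfying $UFU^* = \textbf{B}(\CH_1)\overline\otimes \C$ and $UF'U^* = \C\overline\otimes \textbf{B}(\CH_2)$. The containment $B\subseteq F$ then yields $UBU^*\subseteq \textbf{B}(\CH_1)\otimes \C$, and defining $\pi_B:B\to \textbf{B}(\CH_1)$ by $UbU^* = \pi_B(b)\otimes 1$ produces a faithful normal representation; analogously $M'\subseteq F'$ gives $\pi_{M'}:M'\to \textbf{B}(\CH_2)$. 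The mutually commuting copies $\pi_B(B)\otimes \C$ and $\C\otimes \pi_{M'}(M')$ generate precisely $\pi_B(B)\overline\otimes \pi_{M'}(M')$ as a von Neumann algebra, and $\mathrm{Ad}(U)$ is then the desired spatial isomorphism sending $xy'$ to $\pi_B(x)\otimes \pi_{M'}(y')$.

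For the converse $(2)\Rightarrow (1)$, I would take the unitary $U:\CH_\varphi\to \CH_1\otimes \CH_2$ implementing the spatial isomorphism and set $F:=U^*(\textbf{B}(\CH_1)\overline\otimes \C)U$. This is a type $\rm{I}$ factor, and from $UBU^* = \pi_B(B)\otimes \C\subseteq \textbf{B}(\CH_1)\otimes \C$ and $UM'U^* = \C\otimes \pi_{M'}(M')\subseteq \C\otimes \textbf{B}(\CH_2) = UF'U^*$ one reads off $B\subseteq F$ and $M'\subseteq F'$, the latter equivalent to $F\subseteq M$.

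The relative-commutant formula follows by taking commutants across the spatial isomorphism in $(2)$: on the source side $(B\vee M')' = B'\cap M$, and on the target side Tomita's commutation theorem gives $(\pi_B(B)\overline\otimes \pi_{M'}(M'))' = \pi_B(B)'\overline\otimes \pi_{M'}(M')'$, and since the isomorphism is spatial this identification passes to the commutants. Finally, when $M$ is type $\rm{III}$, so is $M'$ (types are preserved under commutation), hence $\pi_{M'}(M')$ is type $\rm{III}$ (faithful normal images preserve type), and so is $\pi_{M'}(M')'$ since the commutant of a type $\rm{III}$ von Neumann algebra in any faithful normal representation is type $\rm{III}$; because tensoring with a type $\rm{III}$ algebra preserves the type, $B'\cap M$ is type $\rm{III}$. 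The one genuinely delicate step is the spatial realisation in $(1)\Rightarrow (2)$: one must invoke the structure theorem for type $\rm{I}$ factors in just the form that exhibits $F$ and $F'$ simultaneously as matched tensor factors of $\CH_\varphi$, so that $B$ and $M'$ land inside the respective factors; once this matched decomposition is in hand, all the subsequent identifications are routine.
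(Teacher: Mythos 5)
Your argument is correct and is essentially the standard proof of the cited result \cite[Lemma 2]{DL83}: the paper itself offers no proof, merely attributing the statement to Tomita's commutation theorem, and your use of the spatial structure theorem for type $\rm{I}$ factors to produce the matched tensor decomposition, followed by taking commutants and the standard type-$\rm{III}$ permanence properties, is exactly the argument the citation is standing in for. No gaps.
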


\begin{Remark}\label{Quasisplitingeneral}
\noindent 
\begin{enumerate}
\item If $B\subseteq M$ be a quasi-split inclusion as in Defn. \ref{split}, then there exists a projection $0\neq p\in \mathcal{Z}(B\overline\otimes M^{op})$ such that $(B\overline\otimes M^{op})p\simeq B \vee M^{\prime}$. Further, if $M$ is of type $\rm{III}$ then so is $B \vee M^{\prime}$. Therefore, the isomophism is spatial \cite[Cor. 8.13]{StZs}. Consequently, on taking commutants we have, $B^\prime\cap M \simeq (B_0\overline\otimes M)p$, where $B_0$ is the commutant of $B$ taken in $\mathcal{H}_\varphi$. Thus, $B^\prime\cap M$ is of type $\rm{III}$. 

\item Let $B\subseteq M$ be a quasi-split inclusion such that $B$ is a MASA in $M$. Then $B\vee M^\prime$ is a type $\rm{I}$ algebra, as $B=B^\prime\cap M = (B\vee M^\prime)^\prime$ is abelian. Consequently, $(B\overline\otimes M^{op})p$ is of type $\rm{I}$ too, and thus $B\overline\otimes M^{op}$ has a non-trivial type $\rm{I}$ central summand. This follows from the previous analysis. However, more is true. We claim that $B\subseteq M$ is isomorphic to a  direct sum of MASAs  in type $\rm{I}$ factors.

First, note that if $0\neq p_0\in \mathcal{Z}(M)$, then by Defn. \ref{split} it follows that $Bp_0\subseteq Mp_0$ is also a quasi-split inclusion. Since $\mathcal{Z}(M)\subseteq B$, so $\mathcal{Z}(M)\subseteq M$ is also a quasi-split inclusion with associated homomorphism $\eta:\mathcal{Z}(M)\overline\otimes M^{op}\rightarrow \mathcal{Z}(M)\vee M^\prime$. Let $e_{\mathcal{Z}(M)}$ denote the Jones' projection associated to $\mathcal{Z}(M)$. Then, by compressing $\eta_{\upharpoonleft\mathcal{Z}(M)\overline\otimes\mathcal{Z}(M) }$ with $e_{\mathcal{Z}(M)}$, it follows that $\mathcal{Z}(M)\subseteq \mathcal{Z}(M)$ is a quasi-split inclusion. Consequently, $\mathcal{Z}(M)$ is completely atomic. Therefore, $M$ is a direct sum of factors. This along with the initial argument establishes the claim.   
\end{enumerate}
\end{Remark}

\begin{Corollary}\label{largerelativecommutant}
If $B\subseteq M$ be a quasi-split inclusion of von Neumann algebras where $M$ is of type $\rm{III}$, then $B^\prime\cap M$ is of type $\rm{III}$. 
\end{Corollary}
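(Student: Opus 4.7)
The plan is to deduce the corollary directly from Remark \ref{Quasisplitingeneral}(1); almost every ingredient is already in place, so I just need to string the pieces together. Since $B \subseteq M$ is quasi-split, the canonical map $\eta: B \overline\otimes M^{op} \to B \vee M'$ is a normal surjective $*$-homomorphism. Its kernel, being a $\sigma$-weakly closed two-sided ideal of $B \overline\otimes M^{op}$, equals $(B \overline\otimes M^{op})(1-p)$ for a unique central projection $p \in \mathcal{Z}(B \overline\otimes M^{op})$. Hence $\eta$ descends to a normal $*$-isomorphism $(B \overline\otimes M^{op})p \simeq B \vee M'$.

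Next I would argue that both sides of this isomorphism are of type $\rm{III}$. Indeed, since $M$ is of type $\rm{III}$ in standard form, $M'$, and hence $M^{op} = J_\varphi M' J_\varphi$, is of type $\rm{III}$. Because tensoring any von Neumann algebra with a type $\rm{III}$ algebra yields a type $\rm{III}$ algebra, $B \overline\otimes M^{op}$ is of type $\rm{III}$; compression by a central projection preserves this property, so $(B \overline\otimes M^{op})p$ is of type $\rm{III}$, and consequently so is $B \vee M'$. Both algebras are therefore properly infinite with separable predual, and by \cite[Cor. 8.13]{StZs} the isomorphism above is spatially implemented.

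Taking commutants of the spatial isomorphism completes the argument. The commutant of $(B\overline\otimes M^{op})p$ in its representation space is $(B_0 \overline\otimes M)p$, where $B_0$ denotes the commutant of $B$ in $\mathbf{B}(\mathcal{H}_\varphi)$, while the commutant of $B \vee M'$ in $\mathbf{B}(\mathcal{H}_\varphi)$ is $B' \cap M$. This gives $B' \cap M \simeq (B_0 \overline\otimes M)p$. Since $M$ is of type $\rm{III}$, the tensor product $B_0 \overline\otimes M$ is of type $\rm{III}$, and compression by a central projection preserves the type, yielding the conclusion.

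I do not anticipate a substantive obstacle here, as the bookkeeping was already carried out in Remark \ref{Quasisplitingeneral}(1); the only technical point worth highlighting is the spatiality of the isomorphism, which rests on both sides being properly infinite and which is what enables the final commutant computation to take place inside a single ambient $\mathbf{B}(\mathcal{H})$.
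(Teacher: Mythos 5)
Your argument is correct and is essentially identical to the paper's own proof, which consists precisely of Remark \ref{Quasisplitingeneral}(1): kernel of $\eta$ cut out by a central projection, type $\rm{III}$ of $(B\overline\otimes M^{op})p\simeq B\vee M'$, spatiality via \cite[Cor. 8.13]{StZs}, and the commutant computation $B'\cap M\simeq (B_0\overline\otimes M)p$. The only slip is cosmetic: on $\mathcal{H}_\varphi$ one has $M^{op}\cong J_\varphi MJ_\varphi=M'$ (not $J_\varphi M'J_\varphi$, which is $M$), but this does not affect the conclusion that $M^{op}$ is of type $\rm{III}$.
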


\section{The relative commutant of $M_\xi$ }\label{Commutants}

In this section, we will measure the relative commutant of some generating abelian subalgebras of $M_q$ and establish that there is no easy way to construct MASAs in $M_q$ other than considering the generators that live inside the centralizer $M_q^\varphi$ $($see \cite{BM17}$)$. Our results are valid for all $-1<q<1$ and complete when $M_q$ is a factor.

We believe that rather than providing a general proof that takes into account both the almost periodic and weak mixing components of the associated orthogonal representation, it is more illuminating to 
work out the details in the case of a single $2\times 2$ almost periodic component of the orthogonal representation. In fact, this is the way one would find this proof. 

Since $t\mapsto U_t$, $t\in \R$, is a strongly continuous orthogonal representation of $\R$ on the real Hilbert space $\CH_\R$, there is a unique decomposition $($c.f. \cite{Shlyakhtenko}$)$,
\begin{align}\label{Representation}
( \CH_\R, U_t)  =  
\left(\bigoplus_{j = 1}^{N_1} (\R, \text{id} )\right) \oplus \left( \bigoplus_{k=1}^{N_2}(\CH_\R(k), U_t(k) ) \right) \oplus (\widetilde{\CH}_\R, U^{wm}_t),
\end{align}
where $0\leq N_1,N_2\leq \aleph_0$, 
\begin{align}\label{EvenTranform}
\CH_\R(k) = \R^2, \quad U_t(k)  = \left( \begin{matrix} \cos( t\log \lambda_k)& - \sin( t\log\lambda_k) \\
\sin( t\log\lambda_k)& \cos( t\log\lambda_k)  \end{matrix} \right), \text{ }\lambda_k > 1,
\end{align}
and $(\widetilde{\CH}_\R,U^{wm}_t)$ corresponds to the weakly mixing component of the orthogonal representation; thus $\widetilde{\CH}_\R$ is either $0$ or infinite dimensional. 
In this section,  we assume that $N_2\neq 0$ or $\widetilde{\CH}_\R\neq 0$. 

First assume $N_2\neq 0$. Let $\xi_{2k-1}=0\oplus\cdots\oplus 0\oplus\left( \begin{matrix}1\\0 \end{matrix}\right)\oplus 0\oplus\cdots \oplus 0\in\bigoplus_{k=1}^{N_2}\CH_\R(k)$ and $\xi_{2k} =0\oplus\cdots\oplus 0\oplus\left( \begin{matrix}0\\1\end{matrix}\right)\oplus 0\oplus\cdots \oplus 0\in \bigoplus_{k=1}^{N_2}\CH_\R(k)$ be vectors with nonzero entries in the $k$-th position for $1\leq k\leq N_2$. Denote 
\begin{align*}
\zeta_{2k-1}= \frac{\sqrt{\lambda_k +1}}{2}(\xi_{2k-1}+i\xi_{2k}) \text{ and }\zeta_{2k}=\frac{\sqrt{{\lambda}^{-1}_k+1}}{2}(\xi_{2k-1}-i\xi_{2k}).
\end{align*}
Thus, $\zeta_{2k-1},\zeta_{2k} \in\CH_\R(k)+i\CH_\R(k)$ form an orthonormal basis of $(\CH_\R(k)+i\CH_\R(k),\langle\cdot,\cdot\rangle_{U})$ for $1\leq k\leq N_2$.  The analytic generator $A(k)$ of $(U_t(k))$ is given by 
\begin{align*}
A(k)=\frac{1}{2}\left( \begin{matrix} \lambda_k + \frac{1}{\lambda_k} & i(\lambda_k - \frac{1}{\lambda_k})\\
-i(\lambda_k -\frac{1}{ \lambda_k}) &\lambda_k + \frac{1}{\lambda_k}
\end{matrix}\right), \text{ }1\leq k\leq N_2.
\end{align*}
Moreover, 
\begin{align*}
A(k)\zeta_{2k-1} = \frac{1}{\lambda_k}\zeta_{2k-1}  \text{ and }  A(k)\zeta_{2k} = \lambda_k\zeta_{2k}, \text{ }1\leq k\leq N_2.
\end{align*}

Fix $\tilde{k}$, with $1\leq \tilde{k}\leq N_2$, and rename the pair $(\xi_{2\tilde{k}-1}, \xi_{2\tilde{k}} )= (\xi_0,\xi_0^{\prime})$ to distinguish it from  other pairs. 
Accordingly, let $\CH_\R(0)=\CH_\R(\tilde{k})$, $A(0)=A(\tilde{k})$, $\lambda_0=\lambda_{\tilde{k}}$ and $(\zeta_0,\zeta_0^\prime)=(\zeta_{2\tilde{k}-1},\zeta_{2\tilde{k}})$. 

In \cite{BM17}, we established that if $N_1\geq 1$, and $\varsigma_l=0\oplus\cdots\oplus 0\oplus 1\oplus 0\oplus\cdots \oplus 0$, where $1$ appears at the $l$-th position,
then $M_{\varsigma_l}\subseteq M_q^\varphi$ is a MASA in $M_q$. Here, we investigate the relative commutant of $M_{\xi_0}$ and $M_{\xi_0^\prime}$. In contrast with the results in \cite{BM17}, one might expect that $M_{\xi_0}$ and hence by symmetry $M_{\xi_0^\prime}$ too would be MASAs in $M_q$; but we show that $M_{\xi_0}\subseteq M_q$ $($and hence $M_{\xi_0^\prime}\subseteq M_q)$ is a quasi-split inclusion, which can be  surprising or unexpected because exhibiting operators in $M_q$ by hand that commutes with $M_{\xi_0}$ is very hard. We only work with $M_{\xi_0}$, as the analysis in the case of $M_{\xi_0^\prime}$ is analogous. 

We will consider the restriction of the symmetric embedding  ${\Phi_2}_{\upharpoonleft M_{\xi_0}} : M_{\xi_0} \rightarrow L^2(M_q,\varphi)$. As noted in \S\ref{splitinclusionsection},
${\Phi_2}_{\upharpoonleft M_{\xi_0}} $ is $\sigma$-strong$^*$ to $\norm{\cdot}_q$ continuous map of norm $1$. 

We now explore the behavior of $\Delta$.   Denote  
\begin{align*}
e_0 = \frac{1} {\sqrt{2}}(\xi_0+i\xi_{0}')
\text{ and }
e_0^\prime = \frac{1} {\sqrt{2}}(\xi_0-i\xi_{0}'). 
\end{align*}
Note that $e_0,e_0^\prime$ are respectively scalar multiplies of $\zeta_0$ and $\zeta_0^\prime$ but are \emph{not} unit vectors with respect to $\norm{\cdot}_U$, though they are orthonormal vectors with respect to $\norm{\cdot}_{\CH_\C}$. Nevertheless, $A(0) e_0 =\frac{1}{\lambda_0} e_0 $ and $A(0) e_0' = \lambda_0  e_0' $. Further, note that, 
\begin{align}\label{linearaddition}
\xi_0 = \frac{1} {\sqrt{2}}(e_0 + e_{0}')\text{ and } \xi_0^\prime =\frac{i} {\sqrt{2}}(e_{0}'-e_0).
\end{align}

Let $W: \C^2 \rightarrow \C^2 $ be the unitary $($with respect to $\langle\cdot,\cdot\rangle_{\CH_\C})$ such that $W\xi_0 = e_0 $ and $W\xi_0' = e_0'$. Then, note that 
\begin{align}\label{diagonalize}
W^*A(0)W= \left( \begin{matrix} \frac{1}{\lambda_0} & 0\\
0 &  \lambda_0 \end{matrix}\right). 
\end{align}

The following calculations are crucial for the analysis.

\begin{Lemma}\label{normdelta} Let $ \alpha,\beta \in \R$ and $z=\alpha+i\beta$, then 
$\norm{\Delta^z\xi_0}_q= \sqrt{ \frac{\lambda_0^{2\alpha} +\lambda_0^{1-2\alpha}}{1+\lambda_0} } $.

\end{Lemma}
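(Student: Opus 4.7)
The plan is to reduce the computation to a one-particle calculation and exploit the eigenvector decomposition of $A(0)$ given by $e_0, e_0'$.

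First I would note that on the single-particle space $\CH \subseteq \CF_q(\CH)$, the $q$-inner product coincides with $\langle \cdot,\cdot\rangle_U$: this is immediate from Eq.~\eqref{qFock} with $n=m=1$, since $S_1$ is trivial and $q^0 = 1$. Thus $\norm{\Delta^z\xi_0}_q = \norm{\Delta^z\xi_0}_U$. Next, by the modular theory recalled in Eq.~\eqref{modulartheory}, $\Delta$ restricted to $\CH$ acts as $A^{-1}$ on vectors from $\CH_\R \cap \mathfrak{D}(A^{-1})$; by spectral extension $\Delta^z \xi_0 = A^{-z}\xi_0$ for all $z = \alpha + i\beta \in \C$ (noting $\xi_0$ lies in the two-dimensional invariant subspace $\CH_\R(0) + i\CH_\R(0)$ on which $A(0)$ is bounded and bounded below).

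Using Eq.~\eqref{linearaddition} and the eigenvalue relations $A(0)e_0 = \lambda_0^{-1} e_0$, $A(0)e_0' = \lambda_0 e_0'$, I would write
\begin{equation*}
\Delta^z \xi_0 \;=\; A^{-z}\xi_0 \;=\; \tfrac{1}{\sqrt{2}}\bigl(\lambda_0^{z}\,e_0 + \lambda_0^{-z}\,e_0'\bigr).
\end{equation*}
Then I would use the defining identity $\langle \eta_1,\eta_2\rangle_U = \langle \tfrac{2}{1+A^{-1}}\eta_1, \eta_2\rangle_{\CH_\C}$ together with the fact that $e_0, e_0'$ are orthonormal with respect to $\langle\cdot,\cdot\rangle_{\CH_\C}$ (and still eigenvectors of $A(0)$, hence of $\frac{2}{1+A^{-1}}$, with eigenvalues $\frac{2}{1+\lambda_0}$ and $\frac{2\lambda_0}{1+\lambda_0}$ respectively). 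A direct expansion then yields
\begin{equation*}
\norm{\Delta^z\xi_0}_U^2 \;=\; \tfrac{1}{2}\Bigl(\tfrac{2}{1+\lambda_0}\,|\lambda_0^{z}|^2 + \tfrac{2\lambda_0}{1+\lambda_0}\,|\lambda_0^{-z}|^2\Bigr) \;=\; \frac{\lambda_0^{2\alpha} + \lambda_0^{1-2\alpha}}{1+\lambda_0},
\end{equation*}
since $|\lambda_0^{z}|^2 = \lambda_0^{2\alpha}$ for real $\lambda_0 > 0$ and $z = \alpha + i\beta$. Taking the square root gives the stated identity.

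There is no real obstacle here; the only subtlety is to keep the two inner products $\langle\cdot,\cdot\rangle_{\CH_\C}$ and $\langle\cdot,\cdot\rangle_U$ straight and to remember that $e_0, e_0'$ are unit vectors for the former but not the latter. The imaginary part $\beta$ of $z$ drops out because $\lambda_0$ is a positive real, which is the mechanism making $\norm{\Delta^z\xi_0}_q$ depend only on $\re z$.
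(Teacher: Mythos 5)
Your argument is correct and is essentially the paper's own computation: both reduce to the one-particle space where $\langle\cdot,\cdot\rangle_q=\langle\cdot,\cdot\rangle_U$ and $\Delta$ acts as $A^{-1}$, and both exploit the eigenvectors $e_0,e_0'$ of $A(0)$ together with the relation $\langle\cdot,\cdot\rangle_U=\langle \tfrac{2A}{1+A}\cdot,\cdot\rangle_{\CH_\C}$. The only cosmetic difference is that the paper first disposes of $\beta$ via unitarity of $\Delta^{i\beta}$ and then conjugates by $W$ to diagonalize, whereas you expand directly in the eigenbasis and absorb $\beta$ through $\abs{\lambda_0^{z}}=\lambda_0^{\alpha}$; the end result is identical.
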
\label{Deltaonxi0}
\begin{proof} 
Note that from Prop. \ref{Eigenvector} and Eq. \eqref{modulartheory}, it follows that  $s_q(\xi_0)$ is analytic with respect to $(\sigma_t^\varphi)$. Observe that
\begin{align*}
\norm{\Delta^\alpha \xi_0}_q^2 &= \langle \Delta^\alpha \xi_0, \Delta^\alpha \xi_0 \rangle_q\\
& = \langle \Delta^{2\alpha}\xi_0,\xi_0 \rangle_q\\
&= \langle \frac{2A}{1+A} A^{-2\alpha} \xi_0,  \xi_0 \rangle_{\CH_\C} \text{ (by Prop. \ref{Eigenvector} and Eq. \eqref{modulartheory})} \\
&= \langle \frac{2A(0)}{1+A(0)} {A(0)}^{-2\alpha} \xi_0,  \xi_0 \rangle_{\CH_\C} \\
&= \langle \frac{2{A(0)}^{1-2\alpha}}{1+A(0)} \xi_0,   \xi_0 \rangle_{\CH_\C} \\
&= \langle WW^* \frac{2{A(0)}^{1-2\alpha}}{1+A(0)} WW^*\xi_0,   \xi_0 \rangle_{\CH_\C}\\
&= \Big\langle  \left( \begin{matrix} \frac{2\lambda_0^{2\alpha}}{1+\lambda_0} & 0\\
0 &  \frac{2\lambda_0^{1-2\alpha}}{1+\lambda_0} \end{matrix}\right) W^* \xi_0,  W^* \xi_0 \Big\rangle_{\CH_\C}  \text{ (by Eq. \eqref{diagonalize})} \\
&= \half 	\Big\langle  \left( \begin{matrix} \frac{2\lambda_0^{2\alpha}}{1+\lambda_0} & 0\\
0 & \frac{2\lambda_0^{1-2\alpha}}{1+\lambda_0} \end{matrix}\right) (\xi_0+\xi_{0}^\prime),  (\xi_0+\xi_{0}^\prime)\Big \rangle_{\CH_\C}  \text{ (use Eq. \eqref{linearaddition})}\\
&= \half 	\Big\langle  \left( \begin{matrix} \frac{2\lambda_0^{2\alpha}}{1+\lambda_0} & 0\\
0 &  \frac{2\lambda_0^{1-2\alpha}}{1+\lambda_0} \end{matrix}\right) \left( \begin{matrix} 1 \\ 1\end{matrix}\right),   \left( \begin{matrix} 1 \\ 1\end{matrix}\right) \Big\rangle_{\CH_\C}\\ 
&=  \frac{\lambda_0^{2\alpha}}{1+\lambda_0} +\frac{\lambda_0^{1-2\alpha}}{1+\lambda_0}  \\
& =\frac{\lambda_0^{2\alpha} +\lambda_0^{1-2\alpha}}{1+\lambda_0}.
\end{align*}
Finally, note that $\norm{\Delta^z\xi_0}_q=\norm{\Delta^\alpha\xi_0}_q$. 
\end{proof}

\begin{Corollary}\label{xi-norm}
$\norm{\Delta^{\frac{1}{4}} \xi_0}_q= \sqrt{\frac{2\lambda_0^\half}{1+\lambda_0}} $.
\end{Corollary}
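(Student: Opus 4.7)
The plan is to simply invoke the previous lemma (Lemma \ref{normdelta}) with the specific choice $\alpha = \tfrac{1}{4}$, $\beta = 0$, so that $z = \tfrac{1}{4}$. No further work should be required, since this is precisely the sort of specialization the lemma is set up to deliver.

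Substituting $\alpha = \tfrac{1}{4}$ into the formula
\[
\norm{\Delta^{z}\xi_0}_q = \sqrt{\frac{\lambda_0^{2\alpha} + \lambda_0^{1-2\alpha}}{1+\lambda_0}},
\]
the exponents in the numerator both collapse to $\tfrac{1}{2}$: namely $2\alpha = \tfrac{1}{2}$ and $1 - 2\alpha = \tfrac{1}{2}$. Hence the numerator becomes $\lambda_0^{1/2} + \lambda_0^{1/2} = 2\lambda_0^{1/2}$, and one reads off
\[
\norm{\Delta^{1/4}\xi_0}_q = \sqrt{\frac{2\lambda_0^{1/2}}{1+\lambda_0}},
\]
which is the claimed identity. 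There is no substantive obstacle here; the corollary is a one-line computation from the lemma, recorded separately only because the value $\alpha = \tfrac{1}{4}$ is the one relevant to the symmetric embedding $\Phi_2(x) = \Delta^{1/4} x \Omega_\varphi$ that governs the quasi-split analysis of $M_{\xi_0} \subseteq M_q$ to be carried out in the sequel.
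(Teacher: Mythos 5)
Your proposal is correct and coincides with the paper's own proof: both simply specialize Lemma \ref{normdelta} to $\alpha=\tfrac14$, $\beta=0$, whereupon $2\alpha=1-2\alpha=\tfrac12$ and the numerator collapses to $2\lambda_0^{1/2}$. Nothing further is needed.
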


\begin{proof}
The proof is immediate from Lemma \ref{normdelta} by putting $ \alpha = \frac{1}{4}$ and $\beta=0$. 
\end{proof}

\begin{Theorem}\label{compactness}
$ \Delta^{\frac{1}{4}}_{\upharpoonleft L^2(M_{\xi_0} , \varphi)} :  L^2(M_{\xi_0},\varphi) \rightarrow  \mathcal{F}_q$ is a Hilbert-Schmidt operator of norm $1$. In particular, 
${\Phi_2}_{\upharpoonleft M_{\xi_0}}:M_{\xi_0}\rightarrow \mathcal{F}_q$ is compact.
\end{Theorem}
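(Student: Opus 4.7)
The plan is to prove the stronger Hilbert--Schmidt claim for $\Delta^{\frac{1}{4}}$ at the $L^2$ level and then derive compactness of ${\Phi_2}_{\upharpoonleft M_{\xi_0}}$ by a factorization argument. The starting point is that $L^2(M_{\xi_0},\varphi)=\overline{M_{\xi_0}\Omega}^{\|\cdot\|_q}$ coincides with the closed span of $\{\xi_0^{\otimes n}:n\geq 0\}$ by the preamble discussion, and since $\|\xi_0\|_U=1$ (verifiable from $\xi_0=\tfrac{1}{\sqrt 2}(e_0+e_0')$, the orthogonality of $e_0,e_0'$ in $\langle\cdot,\cdot\rangle_U$, and the known $\|\cdot\|_U$-norms of $e_0,e_0'$), the formula \eqref{Normelt} tells us that $\{\xi_0^{\otimes n}/\sqrt{[n]_q!}\}_{n\geq 0}$ is an orthonormal basis of $L^2(M_{\xi_0},\varphi)$. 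Hence the Hilbert--Schmidt computation reduces to
\[
\|\Delta^{\frac{1}{4}}_{\upharpoonleft L^2(M_{\xi_0},\varphi)}\|_{\mathrm{HS}}^2 \;=\; \sum_{n\geq 0}\frac{\|\Delta^{\frac{1}{4}}\xi_0^{\otimes n}\|_q^2}{[n]_q!}.
\]

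The central identity I would prove is $\Delta^{\frac{1}{4}}\xi_0^{\otimes n}=(A^{-\frac{1}{4}}\xi_0)^{\otimes n}$. Since $\xi_0\in\CH_\R(0)+i\CH_\R(0)$, a 2-dimensional $A$-invariant subspace on which $A(0)$ is bounded and invertible with eigenvalues $\lambda_0^{\pm 1}$, $A^z\xi_0$ is defined for every $z\in\C$; the identity $\Delta^{it}=\CF(U_{-t})$ then admits an analytic continuation in the strip acting factorwise on simple tensors drawn from this subspace, which yields the claimed formula. Applying \eqref{Normelt} to the non-unit vector $A^{-\frac{1}{4}}\xi_0$ and using Corollary \ref{xi-norm} (together with $\|\eta\|_q=\|\eta\|_U$ for $\eta\in\CH$) gives
\[
\|\Delta^{\frac{1}{4}}\xi_0^{\otimes n}\|_q^2 \;=\; \|A^{-\frac{1}{4}}\xi_0\|_U^{2n}\,[n]_q! \;=\; c^n\,[n]_q!,\qquad c\;:=\;\frac{2\lambda_0^{\frac{1}{2}}}{1+\lambda_0}.
\]
The AM--GM inequality for $\lambda_0>1$ forces $c<1$, so
\[
\|\Delta^{\frac{1}{4}}_{\upharpoonleft L^2(M_{\xi_0},\varphi)}\|_{\mathrm{HS}}^2 \;=\; \sum_{n\geq 0}c^n \;=\; \frac{1}{1-c} \;<\;\infty,
\]
which establishes the Hilbert--Schmidt assertion.

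For the operator-norm statement, I would exploit orthogonality of different tensor grades: the vectors $(A^{-\frac{1}{4}}\xi_0)^{\otimes n}$ sit in the pairwise orthogonal subspaces $\CH^{\otimes_q n}$, so for an arbitrary $\eta=\sum_n a_n\xi_0^{\otimes n}\in L^2(M_{\xi_0},\varphi)$ (with $\|\eta\|_q^2=\sum_n|a_n|^2[n]_q!$ by orthogonality of the $\xi_0^{\otimes n}$) one obtains
\[
\|\Delta^{\frac{1}{4}}\eta\|_q^2 \;=\; \sum_n|a_n|^2c^n[n]_q! \;\leq\; \sum_n|a_n|^2[n]_q! \;=\; \|\eta\|_q^2,
\]
with equality realized by $\eta=\Omega$ (since $\Delta^{\frac{1}{4}}\Omega=\Omega$), pinning the operator norm at $1$. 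Compactness of ${\Phi_2}_{\upharpoonleft M_{\xi_0}}$ then follows by writing it as the composition of the canonical contraction $M_{\xi_0}\ni x\mapsto x\Omega\in L^2(M_{\xi_0},\varphi)$ with the Hilbert--Schmidt map $\Delta^{\frac{1}{4}}_{\upharpoonleft L^2(M_{\xi_0},\varphi)}$.

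The main obstacle I anticipate is the rigorous justification of the factorwise identity $\Delta^{\frac{1}{4}}\xi_0^{\otimes n}=(A^{-\frac{1}{4}}\xi_0)^{\otimes n}$; this is essentially the $n$-fold analogue of the $n=1$ computation done in Lemma \ref{normdelta}, and should transfer by the same explicit $2\times 2$ diagonalization of $A(0)$ combined with functional calculus applied on the (finite-dimensional, hence automatically in-domain) subspaces spanned by tensor monomials in $e_0,e_0'$.
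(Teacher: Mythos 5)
Your proposal is correct and follows essentially the same route as the paper: the orthonormal basis $\{\xi_0^{\otimes m}/\sqrt{[m]_q!}\}_{m\geq 0}$, the factorwise action of $\Delta^{\frac14}$ on $\xi_0^{\otimes m}$ reducing everything to $\norm{\Delta^{\frac14}\xi_0}_q^2=\frac{2\lambda_0^{1/2}}{1+\lambda_0}<1$, and the resulting geometric series for the Hilbert--Schmidt norm. The only point the paper treats more explicitly is the use of closedness of $\Delta^{\frac14}$ to justify applying it term by term and thereby to extend it boundedly from $M_{\xi_0}\Omega$ to all of $L^2(M_{\xi_0},\varphi)$, a routine step you gloss over; your factorization argument for compactness of ${\Phi_2}_{\upharpoonleft M_{\xi_0}}$ is equivalent to the paper's sequential $w^*$-to-norm argument.
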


\begin{proof}
Let  $\mu = \frac{2\lambda_0^\half}{1+\lambda_0}$.  Since $ \lambda_0 > 1$, so $ \mu <1$. Recall that $\{ \frac{\xi_0^{ \otimes m }}{\sqrt{[m]_q!}} :~  m\geq 0 \}\subseteq M_q\Omega$ $($see Lemma \ref{VectorinMq} and the discussion following it$)$ is an orthonormal basis of $L^2(M_{\xi_0}, \varphi) $. Let $b\in M_{\xi_0}$. Expand 
\begin{align*}
b\Omega = \sum_{m=0}^\infty b_m \frac{\xi_0^{ \otimes m }}{\sqrt{[m]_q!}},
\end{align*}
where $b_m\in\C$ for all $m$ and $\sum_{m=0}^\infty\abs{b_m}^2=\norm{b\Omega}_q^2$. Observe that 
\begin{align*}
\sum_{m=0}^\infty \abs{b_m}^2 \norm{\Delta^{\frac{1}{4}}\frac{\xi_0^{ \otimes m }}{\sqrt{[m]_q!}}}_q^2&=\sum_{m=0}^\infty \abs{b_m}^2 \frac{1}{{[m]_q!}}\norm{({\Delta^\frac{1}{4}\xi_0})^{\otimes m}}_q^2 \text{ (by Eq. \eqref{modulartheory})}\\
&=\sum_{m=0}^\infty \abs{b_m}^2\mu^m \text{ (by Eq. \eqref{Normelt} and Cor. \ref{xi-norm})}\\
&\leq \norm{b\Omega}_q^2.
\end{align*}
Consequently, the series $\sum_{m=0}^\infty b_m \Delta^{\frac{1}{4}}\frac{\xi_0^{ \otimes m }}{\sqrt{[m]_q!}}$ defines a unique element in $\mathcal{F}_q$. Hence, approximating 
$b\Omega$  with $\{\sum_{m=0}^\ell b_m \frac{\xi_0^{ \otimes m }}{\sqrt{[m]_q!}}\}_\ell$, noting that $\Delta^{\frac{1}{4}}$ is closed and using Eq. \eqref{modulartheory}, it follows that 
\begin{align*}
&\Delta^{\frac{1}{4}}b\Omega = \sum_{m=0}^\infty b_m \Delta^{\frac{1}{4}}\frac{\xi_0^{ \otimes m }}{\sqrt{[m]_q!}}, \text{ and, }\\
&\norm{\Delta^{\frac{1}{4}}b\Omega}_q \leq \norm{b\Omega}_q.
\end{align*}
It follows that $\Delta^{\frac{1}{4}}_{\upharpoonleft M_{\xi_0}\Omega}$ admits a bounded extension to $L^2(M_{\xi_0},\varphi)$ and thus is defined on $L^2(M_{\xi_0},\varphi)$.

Further, 
\begin{align*}
\sum_{m=0}^\infty\norm{\Delta^{\frac{1}{4}}(\frac{\xi_0^{\otimes m}}{\sqrt{[m]_q!}})}_q^2&=\sum_{m=0}^\infty\frac{1}{[m]_q!}\norm{(\Delta^{\frac{1}{4}}\xi_0)^{\otimes m}}_q^2 \text{ (by Eq. \eqref{modulartheory})}\\
&=\sum_{m=0}^\infty\mu^m \text{ (by Eq. \eqref{Normelt} and Cor. \ref{xi-norm})}\\
&=\frac{1}{1-\mu}<\infty.
\end{align*}
Therefore, $\Delta^{\frac{1}{4}}_{\upharpoonleft L^2(M_{\xi_0},\varphi)} :L^2(M_{\xi_0},\varphi)\rightarrow \mathcal{F}_q$ is a Hilbert-Schmidt operator. It is obvious that 
$\norm{\Delta^{\frac{1}{4}}_{\upharpoonleft L^2(M_{\xi_0},\varphi)}}=1$.

Consequently, ${\Phi_2}_{\upharpoonleft M_{\xi_0}}$ is compact. Indeed, if $M_{\xi_0}\ni b_n\rightarrow 0$ in the $w^*$-topology, then $b_n\Omega \rightarrow 0$ weakly in  $L^2(M_{\xi_0},\varphi)$. By compactness, $\Delta^{\frac{1}{4}}b_n\Omega\rightarrow 0$ in $\norm{\cdot}_q$. This completes the proof.
\end{proof}

\begin{Theorem}\label{nuclearmapestablished}
${\Phi_2}_{\upharpoonleft M_{\xi_0}}:M_{\xi_0}\rightarrow \mathcal{F}_q$ is a nuclear map.  
\end{Theorem}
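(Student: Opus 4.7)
The plan is to realize ${\Phi_2}_{\upharpoonleft M_{\xi_0}}$ explicitly as an absolutely convergent sum of rank-one Banach-space operators and thereby verify Defn. \ref{nuclearmaps} directly. Essentially all the analytic work was already carried out in Thm. \ref{compactness}: the family $\{\xi_0^{\otimes m}/\sqrt{[m]_q!}\}_{m\geq 0}$ is an orthonormal basis of $L^2(M_{\xi_0},\varphi)$, one has $\norm{\Delta^{\frac{1}{4}}(\xi_0^{\otimes m}/\sqrt{[m]_q!})}_q = \mu^{m/2}$ with $\mu = \frac{2\lambda_0^{\half}}{1+\lambda_0}\in(0,1)$, and for every $b\in M_{\xi_0}$ the Fourier expansion $b\Omega = \sum_m b_m\,\xi_0^{\otimes m}/\sqrt{[m]_q!}$ is preserved under $\Delta^{\frac{1}{4}}$ term-by-term.

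First I would introduce, for each $m\geq 0$, the linear functional $\phi_m : M_{\xi_0}\rightarrow \C$ defined by
\[
\phi_m(b) \;=\; \Big\langle \frac{\xi_0^{\otimes m}}{\sqrt{[m]_q!}},\, b\Omega\Big\rangle_q.
\]
Cauchy-Schwarz combined with the unit-vector estimate $\norm{b\Omega}_q\leq \norm{b}_\infty$ forces $\phi_m\in (M_{\xi_0})^*$ with $\norm{\phi_m}\leq 1$. Next, I would set $\eta_m = \Delta^{\frac{1}{4}}\big(\xi_0^{\otimes m}/\sqrt{[m]_q!}\big)\in \CF_q$, for which Cor. \ref{xi-norm} combined with Eq. \eqref{Normelt} yield $\norm{\eta_m}_q = \mu^{m/2}$.

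The key identification is that $\phi_m(b)$ is precisely the $m$-th Fourier coefficient of $b\Omega$ with respect to the orthonormal basis above. Consequently, the computation already performed in Thm. \ref{compactness} rewrites as
\[
{\Phi_2}_{\upharpoonleft M_{\xi_0}}(b) \;=\; \Delta^{\frac{1}{4}}b\Omega \;=\; \sum_{m=0}^\infty \phi_m(b)\,\eta_m \qquad \text{in } \CF_q,
\]
for every $b\in M_{\xi_0}$. The absolute summability condition then reduces to a convergent geometric series,
\[
\sum_{m=0}^\infty \norm{\phi_m}\,\norm{\eta_m}_q \;\leq\; \sum_{m=0}^\infty \mu^{m/2} \;=\; \frac{1}{1-\sqrt{\mu}} \;<\; \infty,
\]
which is exactly the content of Defn. \ref{nuclearmaps}.

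There is essentially no substantive obstacle beyond Thm. \ref{compactness}: the Hilbert-Schmidt estimate there relied on square-summability of $(\mu^{m/2})_m$, whereas nuclearity only demands summability of $(\mu^{m/2})_m$, which is still a convergent geometric series since $\mu<1$. The only conceptual point to watch is that nuclearity is a Banach-space notion, so the functionals $\phi_m$ must be controlled in the operator-norm dual of $M_{\xi_0}$ and not merely in the Hilbert-space dual of $L^2(M_{\xi_0},\varphi)$; the inequality $\norm{b\Omega}_q\leq \norm{b}_\infty$ makes this step automatic.
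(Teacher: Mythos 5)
Your proposal is correct and follows essentially the same route as the paper: both expand $\Delta^{\frac{1}{4}}b\Omega$ along the orthonormal basis $\{\xi_0^{\otimes m}/\sqrt{[m]_q!}\}_{m\geq 0}$, bound the coefficient functionals by $1$ via Cauchy--Schwarz, and sum the geometric series $\sum_m \mu^{m/2}$ with $\mu=\frac{2\lambda_0^{1/2}}{1+\lambda_0}<1$ to verify Defn.~\ref{nuclearmaps}. Your explicit remark that the functionals must be controlled in the Banach dual of $M_{\xi_0}$ rather than in the Hilbert-space dual is a point the paper leaves implicit, but it is handled identically by the estimate $\norm{b\Omega}_q\leq\norm{b}$.
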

\begin{proof}
Following the proof of Thm. \ref{compactness}, it follows that 
\begin{align*}
{\Phi_2}_{\upharpoonleft M_{\xi_0}}(b) &= \Delta^{\frac{1}{4}} b\Omega \\
&= \sum_{m=0}^\infty\langle \frac{\xi_0^{\otimes m}}{ \sqrt{[m]_q!}}, b\Omega \rangle_q\,  \Delta^{\frac{1}{4}} \frac{\xi_0^{\otimes m}}{ \sqrt{[m]_q!}} \\
&= \sum_{m=0}^\infty \psi_m(b) \xi_m, \text{ }b\in M_{\xi_0};
\end{align*}
where $\psi_m\in{(M_{\xi_0})}_* $ is given by $\psi_m(b) =  \langle \frac{\xi_0^{\otimes m}}{ \sqrt{[m]_q!}},~b\Omega\rangle_q $, 
for all $b\in M_{\xi_0}$,  and $\xi_m = \Delta^{\frac{1}{4}} \frac{\xi_0^{\otimes m}}{ \sqrt{[m]_q!}}$, $m\in \mathbb{N}\cup \{0\}$.

By Cauchy-Schwarz inequality, it follows that  $\norm{ \psi_m} \leq 1 $ for all $m\in \mathbb{N}\cup \{0\}$. Therefore, 
\begin{align*}
\sum_{m=0}^\infty \norm{\psi_m}\norm{\xi_m}_q   &\leq	\sum_{m=0}^\infty \norm{\xi_m}_q \\
&=  \sum_{m=0}^\infty \norm{  \Delta^{\frac{1}{4}} \frac{\xi_0^{\otimes m}}{ \sqrt{[m]_q!}}}_q\\
&= \sum_{m=0}^\infty \frac{1}{\sqrt{[m]_q!}} \norm{ (\Delta^{\frac{1}{4}} \xi_0)^{\otimes m}}_q \text{ (by Eq. \eqref{modulartheory})}\\
&= \sum_{m=0}^\infty  (  \frac{2\lambda_0^\half}{1+\lambda_0})^{m/2 } \text{ (by Eq. \eqref{Normelt} and Cor. \ref{xi-norm})}\\
&< \infty \text{  (as }\lambda_0>1).
\end{align*}
Hence, ${\Phi_2}_{\upharpoonleft M_{\xi_0}}$ is a nuclear map $($see Defn. \ref{nuclearmaps}$)$. 
\end{proof}

Now, we extend the above investigation to vectors of the form $0\neq\xi=c_1\xi_0+c_2\xi_0^\prime\in \CH_\R(0)$ with $c_1,c_2\in\R$ and $\abs{c_1}^2+\abs{c_2}^2\leq1$. Calculating as in the proof of Lemma \ref{xi-norm}, one has
\begin{align*}
\langle \frac{2A(0)^{\frac{1}{2}}}{1+A(0)}\xi_0,~\xi_0^\prime\rangle_{\CH_\C}&=\langle WW^*\frac{2A(0)^{\frac{1}{2}}}{1+A(0)}WW^*\xi_0,~\xi_0^\prime\rangle_{\CH_\C}\\
&=\Big\langle \left( \begin{matrix} \frac{2\lambda_0^{\frac{1}{2}}}{1+\lambda_0} & 0\\
0 &  \frac{2\lambda_0^{\frac{1}{2}}}{1+\lambda_0} \end{matrix}\right) W^*\xi_0,W^*\xi_0^\prime\Big\rangle_{\CH_\C} \text{ (use Eq. \eqref{diagonalize})}\\
&=\frac{1}{2}\Big\langle \left( \begin{matrix} \frac{2\lambda_0^{\frac{1}{2}}}{1+\lambda_0} & 0\\
0 &  \frac{2\lambda_0^{\frac{1}{2}}}{1+\lambda_0} \end{matrix}\right) (\xi_0+\xi_0^\prime), ~i(\xi_0^\prime-\xi_0)\Big\rangle_{\CH_\C} \text{ (use Eq. \eqref{linearaddition})}\\
&=\frac{i}{2}\Big\langle \left( \begin{matrix} \frac{2\lambda_0^{\frac{1}{2}}}{1+\lambda_0} & 0\\
0 &  \frac{2\lambda_0^{\frac{1}{2}}}{1+\lambda_0} \end{matrix}\right) \left( \begin{matrix} 1 \\ 1\end{matrix}\right), ~ \left( \begin{matrix}-1 \\ 1\end{matrix}\right)\Big\rangle_{\CH_\C} \text{ (linearity in 2nd variable)}\\
&=0.
\end{align*}
Therefore, if $\xi$ is as stated, then a calculation similar to that in Lemma \ref{xi-norm} shows that 
\begin{align}\label{allalmostperiodic}
\norm{\Delta^{\frac{1}{4}}\xi}_q^2&=\langle \frac{2A(0)^{\frac{1}{2}}}{1+A(0)}\xi,\xi\rangle_{\CH_\C}\\
\nonumber&= \Big\langle \frac{2A(0)^{\frac{1}{2}}}{1+A(0)}(c_1\xi_0+c_2\xi_0^\prime),~(c_1\xi_0+c_2\xi_0^\prime)\Big\rangle_{\CH_\C}\\
\nonumber&=\abs{c_1}^2 \langle \frac{2A(0)^{\frac{1}{2}}}{1+A(0)}\xi_0,~\xi_0\rangle_{\CH_\C}+\abs{c_2}^2 \langle \frac{2A(0)^{\frac{1}{2}}}{1+A(0)}\xi_0^\prime,~\xi_0^\prime\rangle_{\CH_\C}\\
\nonumber&=\frac{2\lambda_0^\half}{1+\lambda_0}(\abs{c_1}^2+\abs{c_2}^2)\\
\nonumber&=\frac{2\lambda_0^\half}{1+\lambda_0}\norm{\xi}_{\CH_\C}^2 \text{ }(=\frac{2\lambda_0^\half}{1+\lambda_0}\norm{\xi}_{U}^2) \\
\nonumber&\leq \frac{2\lambda_0^\half}{1+\lambda_0} <1.
\end{align}

Now suppose that the weakly mixing component of $(U_t)$ is nontrivial, i.e., $\widetilde{\CH}_\R\neq 0$. In this case, we show that for all $\xi\in \widetilde{\CH}_\R$ with $\norm{\xi}_U=1$, we have $\norm{\Delta^{\frac{1}{4}}\xi}_q<1$. 

First fix $\xi\in \CH_\R\cap \mathfrak{D}(A^{-\frac{1}{2}})$ and note that $\xi\in \mathfrak{D}(\Delta^{\frac{1}{2}})$ and hence  $\xi\in \mathfrak{D}(\Delta^{\frac{1}{4}})$. Therefore,
\begin{align}\label{weakmixingnorm}
\norm{\Delta^{\frac{1}{4}}\xi}_q^2& = \langle \Delta^{\frac{1}{4}}\xi, \Delta^{\frac{1}{4}}\xi\rangle_q=\langle \Delta^{\frac{1}{2}}\xi,\xi\rangle_q\\
\nonumber&=\langle A^{-\frac{1}{2}}\xi,\xi\rangle_q ~~\text{ (by Eq. \eqref{modulartheory}, also see \cite[Lemma 1.4]{Hiai})}\\
\nonumber&=\langle \frac{2AA^{-\frac{1}{2}}}{1+A}\xi,\xi\rangle_{\CH_\C}=\langle \frac{2A^{\frac{1}{2}}}{1+A}\xi,\xi\rangle_{\CH_\C}.
\end{align}
Observe that the first and the last expressions in Eq. \eqref{weakmixingnorm} are defined for all $\xi\in\CH_\R$, as $\frac{2A^{\frac{1}{2}}}{1+A}$ is bounded. 

Now, let $\xi\in\CH_\R$ be such that $\norm{\xi}_U=1$. Fix a sequence $\xi_n\in \CH_\R\cap\mathfrak{D}(A^{-\frac{1}{2}})$ such that $\norm{\xi_n}_U=1$ and $\xi_n\rightarrow\xi$
in $\norm{\cdot}_U$ as $n\rightarrow\infty$. Note that $\norm{s_q(\xi_n)}=\norm{s_q(\xi)}=\frac{2}{\sqrt{1-q}}$ for all $n$. Then, $(s_q(\xi_n)-s_q(\xi))\Omega\rightarrow 0$ in $\norm{\cdot}_q$ and hence $(s_q(\xi_n)-s_q(\xi))Jy^*J\Omega \rightarrow 0$ in $\norm{\cdot}_q$ for all $y\in M_q$. A standard density argument forces that $s_q(\xi_n)\rightarrow s_q(\xi)$ in the $\sigma$-strong$^*$ topology $($as $s_q(\xi_n)$, $s_q(\xi)$ are self-adjoint$)$. Combining this with the fact that 
$M_q\ni x\mapsto \Delta^{\frac{1}{4}}x\Omega\in \mathcal{F}_q$ is $\sigma$-strong$^*$ to $\norm{\cdot}_q$ continuous and $\frac{2A^{\frac{1}{2}}}{1+A}$ is bounded, it follows from Eq. \eqref{weakmixingnorm} that 
\begin{align}\label{normcalculated}
\norm{\Delta^{\frac{1}{4}}\xi}_q^2 = \langle \frac{2A^{\frac{1}{2}}}{1+A}\xi,\xi\rangle_{\CH_\C}, \text{ for all }\xi\in \CH_\R. 
\end{align}

Since $\widetilde{\CH}_\R$ is invariant under $(U_t)$ and since we are interested in the behavior of $\Delta^{\frac{1}{4}}$ on $\widetilde{\CH}_\R$, we assume without any loss of generality that $\CH_\R=\widetilde{\CH}_\R$ for the analysis on the weakly mixing component. 

Put $B=\frac{2A^{\frac{1}{2}}}{1+A}$. Then, $0\leq B\leq 1$. Let $B=\int_0^1 \mu d e^B_\mu$ and $A=\int_0^\infty \lambda de^A_\lambda$ denote the spectral resolution of $B$ and $A$ respectively. With respect to the spectral resolution of $A$, the operator $B$ is a direct sum of multiplication by the function $\lambda \rightarrow \frac{2\lambda^{\frac{1}{2}}}{1+\lambda}$ amplified with appropriate multiplicities on classical function spaces. As the spectral measure of $A$ is non-atomic, $B$ has trivial kernel, i.e., $e_0^B=0$. 

Let $f_{\pm}:[0,1]\rightarrow [0,\infty)$ by 
\begin{align*}
f_{\pm}(\mu)=\begin{cases} \frac{(2-\mu^2)\pm 2\sqrt{1-\mu^2}}{\mu^2}, &\mu\in (0,1]\\ 0, &\mu=0.\end{cases}
\end{align*}
By functional calculus, it follows that 
\begin{align*}
&A_1=\int_0^1\lambda de^A_\lambda =\int_0^1 f_{-}(\mu)de^B_\mu, \text{ and, }\\
&A_2=\int_1^\infty \lambda de_\lambda^A = \int_0^1 f_{+}(\mu) de^B_\mu. 
\end{align*}
Note that the ranges of $\chi_{[0,1]}(A)$ and $\chi_{[1,\infty)}(A)$ are orthogonal as the spectral measure of $A$ is non-atomic $($overlap at $\{1\}$ is no issue$)$, and $\chi_{[0,1]}(A)+\chi_{[1,\infty)}(A)=1$, as $A$ is non-singular. Note that $B\in vN(e_\lambda^A: \lambda\in [0,\infty))$, thus $e_\mu^B$ commutes with $e_\lambda^A$ for all $\mu\in [0,1]$ and $\lambda\in [0,\infty)$. 

We claim that the spectral measure of $B$ is non-atomic too. To see this, first let $p_n=\chi_{[1/n, 1]}(B)$, $n\in \N$. If $0<\tau$ is an eigenvalue of $B$, then there exists a unit $($with respect to $\norm{\cdot}_{\CH_\C})$ vector $\delta\in p_n(\CH_\C)$ for some $n\in \N$  such that $B\delta=\tau\delta$. Let $\delta_1= \chi_{[0,1]}(A)\delta$ and $\delta_2=\chi_{[1,\infty)}(A)\delta$. Then, either $\delta_1\neq 0 $ or $\delta_2\neq 0$ or both are nonzero and $\delta=\delta_1+\delta_2$. 

Suppose that $\delta_1\neq 0$. Then, $B\delta_1 =B\chi_{[0,1]}(A)\delta=\chi_{[0,1]}(A)B\delta = \tau\chi_{[0,1]}(A)\delta=\tau\delta_1$.  Therefore, $(p_mf_{-})(B)\delta_1 = (p_mf_{-})(\tau) \delta_1=(p_nf_{-})(\tau) \delta_1=f_{-}(\tau)\delta_1$, for all $m\geq n$. As $\delta_1\in p_n(\CH_\C)$, so $\delta_1\in \mathfrak{D}(A_1)$. Therefore, 
\begin{align*}
A_1\delta_1 = f_{-}(B)\delta_1 =\lim_m (p_mf_{-})(B)\delta_1 = f_{-}(\tau)\delta_1. 
\end{align*}
Consequently, the spectral measure of $A_1$ and hence of $A$ has an atomic component in $(0,1]$. This is a contradiction, and thus $\delta_1=0$. Similarly, by working with $f_+$, it follows that $\delta_2=0$. Therefore, $B$ has no eigenvalues in $(0,1]$. It follows that the spectral measure of $B$ is non-atomic. 

From Eq. \eqref{normcalculated}, Hahn-Hellinger theorem and the fact that the spectral measure of $B^{\frac{1}{2}}$ is non-atomic, it follows that if $\xi\in\widetilde{\CH}_\R$ and $\norm{\xi}_U=1$, then 
\begin{align*}
\norm{\Delta^{\frac{1}{4}}\xi}_q^2 =\norm{B^{\frac{1}{2}}\xi}^2_{\CH_\C} < \norm{\xi}^2_{\CH_\C}= \norm{\xi}^2_{U}=1.
\end{align*}

Now, let $\xi\in\CH_\R$ be such that $\norm{\xi}_{\CH_\C}=\norm{\xi}_U =1$. Following Eq. \eqref{Representation}, let $P_j^1$, $1\leq j\leq N_1$, $P_k^2$, $1\leq k\leq N_2$, and $P_{wm}$ be the orthogonal projections from $\CH_\R$ onto $\R\varsigma_j$, $1\leq j\leq N_1$, $\CH_\R(k)$, $1\leq k\leq N_2$, and $\widetilde{\CH}_\R$ respectively. Then,
\begin{align*}
\xi = \oplus_{j=1}^{N_1} P_j^1 \xi\oplus \oplus_{k=1}^{N_2} P_k^2\xi \oplus P_{wm}\xi.
\end{align*}
Therefore, if $P_k^2\xi\neq 0$ for some $k$ or $P_{wm}\xi\neq 0$, then by Eq. \eqref{allalmostperiodic}
and the preceding discussion, it follows that 
\begin{align*}
\norm{\Delta^{\frac{1}{4}}\xi}^2_q &= \sum_{j=1}^{N_1} \norm{\Delta^{\frac{1}{4}}P_j^1\xi}^2_q + \sum_{k=1}^{N_2} \norm{\Delta^{\frac{1}{4}}P_k^2\xi}^2_q +\norm{\Delta^{\frac{1}{4}}P_{wm}\xi}^2_q \\
&=\sum_{j=1}^{N_1} \norm{P_j^1\xi}^2_q + \sum_{k=1}^{N_2} \norm{\Delta^{\frac{1}{4}}P_k^2\xi}^2_q +\norm{\Delta^{\frac{1}{4}}P_{wm}\xi}^2_q\\
&=\sum_{j=1}^{N_1} \norm{P_j^1\xi}^2_q + \sum_{k=1}^{N_2} \frac{2\lambda_k^{\frac{1}{2}}}{1+\lambda_k}\norm{P_k^2\xi}^2_{U} +\norm{\Delta^{\frac{1}{4}}P_{wm}\xi}^2_q\\
&<\sum_{j=1}^{N_1} \norm{P_j^1\xi}^2_q + \sum_{k=1}^{N_2} \norm{P_k^2\xi}^2_{U} +\norm{P_{wm}\xi}^2_q\\
&=\norm{\xi}_U^2 \\
&=1.
\end{align*}

\begin{Remark}\label{conclusionofmajorthms}
The above analysis entails the following. If $\xi\in\CH_\R$, $\norm{\xi}_U=1$ and $\norm{\Delta^{\frac{1}{4}}\xi}_q<1$, then $M_\xi$ satisfies the conclusions of 
Thm. \ref{compactness} and Thm. \ref{nuclearmapestablished}.
\end{Remark}

Note that, for $p=1,2$, nuclearity of ${\Phi_p}_{\upharpoonleft B}$ and extendability of ${\Phi_p}_{\upharpoonleft B}$ are not usually equivalent $($see Prop. \ref{nuclearimplysplit}$)$. However, in cases that we are interested, the two notions coincide. 

We are now ready to state the main result of this paper. Assuming $dim(\CH_\R)\geq 2$, $N_2\neq 0$ or $\widetilde{\CH}_\R\neq 0$ and following the proof of Thm. \ref{compactness} and Thm. \ref{nuclearmapestablished}, we have:

\begin{Theorem}\label{anyvectoristwocrosstwocomponent}
Let $\xi\in\CH_\R$ be such that $\norm{\xi}_{U}= 1$. Then, the following are equivalent. 
\begin{enumerate}
\item $\xi$ is not fixed by $(U_t)$.
\item $ \Delta^{\frac{1}{4}}_{\upharpoonleft L^2(M_{\xi} , \varphi)} :  L^2(M_{\xi},\varphi) \rightarrow  \mathcal{F}_q$ is a Hilbert-Schmidt operator of norm $1$. In particular, 
${\Phi_2}_{\upharpoonleft M_{\xi}}:M_{\xi}\rightarrow \mathcal{F}_q$ is compact.
\item ${\Phi_2}_{\upharpoonleft M_{\xi}}:M_{\xi}\rightarrow \mathcal{F}_q$ is a nuclear map.
\item $M_\xi\subseteq M_q$ is a quasi-split inclusion.
\end{enumerate}
Suppose $M_q$ is of type $\rm{III}$ and $\xi\in\CH_\R$ satisfies any one of the four equivalent conditions as above. Then, $M_\xi^\prime\cap M_q$ is of type $\rm{III}$. If in addition, $M_q$ is a type $\rm{III}$ factor, then $M_\xi\subseteq M_q$ is a split inclusion and $M_\xi^\prime\cap M_q$ is of type $\rm{III}$.
\end{Theorem}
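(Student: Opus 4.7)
The plan is to close the loop $(1)\Rightarrow(3)\Rightarrow(2)\Rightarrow(1)$ together with $(3)\Rightarrow(4)\Rightarrow(1)$, and then harvest the type $\rm{III}$ statements from the structural results of \S\ref{splitinclusionsection}.

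For $(1)\Rightarrow(3)$, I would simply re-run the arguments of Thm.~\ref{compactness} and Thm.~\ref{nuclearmapestablished} with $\xi_0$ replaced by the given $\xi$ and the constant $\frac{2\lambda_0^{1/2}}{1+\lambda_0}$ replaced by $\mu:=\norm{\Delta^{1/4}\xi}_q^2$. The only new input required is the strict inequality $\mu<1$ whenever $\xi$ is not $(U_t)$-fixed, and this is precisely what the computations leading to Remark~\ref{conclusionofmajorthms} deliver: Eq.~\eqref{allalmostperiodic} handles the almost periodic summands $\CH_\R(k)$, the non-atomicity of the spectral measure of $B=\frac{2A^{1/2}}{1+A}$ handles the weakly mixing component, and orthogonality of the three types of summands in \eqref{Representation} combines the estimates additively to yield $\mu<1$ as long as some non-fixed component of $\xi$ is nonzero. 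The implication $(3)\Rightarrow(2)$ is the standard fact that nuclear maps are compact, and $(3)\Rightarrow(4)$ is Prop.~\ref{nuclearimplysplit}.

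For the converses I would argue by contrapositive, assuming $\xi$ is fixed by $(U_t)$. Then $A\xi=\xi$, so by Eq.~\eqref{modulartheory}, $\Delta\xi^{\otimes m}=\xi^{\otimes m}$ for all $m\geq 0$; since $M_\xi$ is diffuse, $\Delta^{1/4}$ restricted to $L^2(M_\xi,\varphi)$ is the identity on an infinite-dimensional space and therefore not compact, ruling out $(2)$. To rule out $(4)$, I appeal to \cite{BM17}: when $\xi$ is fixed, $M_\xi\subseteq M_q^\varphi$ is a MASA in $M_q$ (established there for the standard basis vectors $\varsigma_l$ and transported to a general unit vector in the fixed subspace via the orthogonal automorphism of $M_q$ induced by a rotation of that subspace). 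Under the standing assumption $N_2\neq 0$ or $\widetilde{\CH}_\R\neq 0$, $M_q$ is a non-type-$\rm I$ factor by \cite{BM17}. Remark~\ref{Quasisplitingeneral}(2), however, forces any ambient algebra containing a quasi-split MASA to be a direct sum of type $\rm I$ factors; a factor with this property would have to be type $\rm I$, contradicting the above.

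Finally, the type $\rm{III}$ addenda follow by feeding the now-established quasi-splitness into the structural results of \S\ref{splitinclusionsection}: Cor.~\ref{largerelativecommutant} gives $M_\xi^\prime\cap M_q$ of type $\rm{III}$ whenever $M_q$ is, and when $M_q$ is additionally a type $\rm{III}$ factor, Prop.~\ref{Factorimplysplit} upgrades quasi-split to split while Lemma~\ref{splitrepresentation} reaffirms the type $\rm{III}$ nature of $M_\xi^\prime\cap M_q$. The only real obstacle I anticipate is the bookkeeping for $(1)\Rightarrow(3)$, where the additive combination of the three norm estimates across \eqref{Representation} must be made rigorous — but this is routine given the individual computations already in place.
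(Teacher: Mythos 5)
Your overall architecture is sound and the forward implications, the appeal to Prop.~\ref{nuclearimplysplit} for quasi-splitness, and the type $\rm{III}$ addenda all match the paper. There is, however, one genuine flaw in your logical chain: you discharge statement $(2)$ via ``$(3)\Rightarrow(2)$ is the standard fact that nuclear maps are compact,'' but that only yields the \emph{in particular} clause of $(2)$ (compactness of ${\Phi_2}_{\upharpoonleft M_\xi}$), not the main clause that $\Delta^{\frac{1}{4}}_{\upharpoonleft L^2(M_\xi,\varphi)}$ is a Hilbert--Schmidt operator of norm $1$. Nuclearity of $\Phi_2$ as a map out of the von Neumann algebra $M_\xi$ (with its operator norm) does not obviously control the restriction of $\Delta^{\frac{1}{4}}$ to all of $L^2(M_\xi,\varphi)$, and you give no argument for it. The repair is immediate and is what the paper does: the single estimate $\norm{\Delta^{\frac{1}{4}}\xi}_q^2=\mu<1$ feeds the computation $\sum_m\norm{\Delta^{\frac{1}{4}}\xi^{\otimes m}/\sqrt{[m]_q!}}_q^2=\sum_m\mu^m<\infty$ of Thm.~\ref{compactness}, so you should prove $(1)\Rightarrow(2)$ directly and then $(2)\Rightarrow(3)$ by the computation of Thm.~\ref{nuclearmapestablished}, rather than trying to descend from $(3)$ to $(2)$.

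For the converses your route genuinely differs from the paper's, and both work. The paper proves $(3)\Rightarrow(1)$ by contradiction: if $\xi$ were fixed, $M_\xi$ admits a faithful normal $\varphi$-preserving conditional expectation and $M_q$ is a type $\rm{III}$ factor, so quasi-split upgrades to split by Prop.~\ref{Factorimplysplit}; restricting the expectation to an intermediate type $\rm{I}$ factor forces $M_\xi$ to be completely atomic, contradicting diffuseness. Your $(2)\Rightarrow(1)$ is more elementary and attractive: if $A\xi=\xi$ then $\Delta^{\frac{1}{4}}$ is the identity on the infinite-dimensional space $L^2(M_\xi,\varphi)$, hence not Hilbert--Schmidt. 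Your $(4)\Rightarrow(1)$ via Rem.~\ref{Quasisplitingeneral}(2) (a quasi-split MASA in a factor forces the factor to be type $\rm{I}$) is also valid and uses the MASA half of \cite[Thm.~5.4]{BM17} where the paper uses the conditional-expectation half; your ``transport by a rotation of the fixed subspace'' is fine but unnecessary, since that theorem already covers an arbitrary unit vector in the fixed subspace. What the paper's single contradiction argument buys is that it settles $(3)\Rightarrow(1)$ and $(4)\Rightarrow(1)$ simultaneously; what your version buys is that the equivalence of $(1)$ and $(2)$ becomes entirely computational, independent of the structure theory of split inclusions.
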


\begin{proof}
The proof of $(1)\Rightarrow (2)\Rightarrow (3)$ follows from the discussion above. 

\noindent $(3)\Rightarrow (1)$. Suppose to the contrary $U_t\xi=\xi$ for all $t\in\R$. Then, $M_\xi\subseteq M_q^\varphi$ is a MASA in $M_q$ possessing a faithful normal $\varphi$-preserving conditional expectation $\mathbb{E}_\xi$ \cite[Thm. 5.4]{BM17}. Further, since $dim(\CH_\R)\geq 2$, we also have that $M_q$ is a  type $\rm{III}$ factor \cite[Thm. 6.3, Thm. 8.2]{BM17}. From Prop. \ref{nuclearimplysplit} and Prop. \ref{Factorimplysplit}, it follows that $M_\xi\subseteq M_q$ is a split inclusion. Let $F$ be an intermediate type $\rm{I}$ factor between $M_\xi$ and $M_q$. Then, ${\mathbb{E}_\xi}_{\upharpoonleft F}:F\rightarrow M_\xi$ is a faithful normal conditional expectation. This forces that $M_\xi$ is completely atomic, which is a contradiction. Thus, $(1)\Leftrightarrow(2)\Leftrightarrow(3)$.

Note that $(3)\Leftrightarrow(4)$ follows from Prop. \ref{nuclearimplysplit}.

If $M_q$ is of type $\rm{III}$, then the conclusion follows directly from Cor. \ref{largerelativecommutant}. 

If $M_q$ is a type $\rm{III}$ factor, then the conclusion follows directly from Prop. \ref{Factorimplysplit} and Prop. \ref{splitrepresentation}.
\end{proof}

\begin{Remark}\label{rem}
\noindent 
\begin{enumerate}
\item Suppose $\xi\in\CH_\R$ satisfies any one of the four equivalent conditions of Thm. \ref{anyvectoristwocrosstwocomponent}. If $M_q$ has a non-trivial type $\rm{III}$ central summand $M_qp$, then by considering the corner $M_\xi p\subseteq M_q p$, it still follows that $M_\xi^\prime\cap M_q$ is large.
\item It is worth noting that if $\xi$ is as in $(1)$ above, and $M_\xi$ is a MASA in $M_q$, then $M_\xi\subseteq M_q$ is a direct sum of MASAs in type $\rm{I}$ factors $($see Rem. \ref{Quasisplitingeneral}$)$. This particularly applies to the most important case when $dim(\CH_\R)=2$. However, this is not enough to conclude that $M_\xi\subseteq M_q$ is not a MASA since we do not know the type of $M_q$. 
\item The change from presence to absence of $\varphi$-preserving conditional expectation onto $M_\xi$, for $\xi\in \CH_\R$ with $\norm{\xi}_U=1$ $($see \cite[Thm. 4.2]{BM17}$)$, drastically changes the situation. The results also explain that the requirement of presence of conditional expectations onto subalgebras to define solidity and strong solidity is necessary, as without appropriate conditional expectations the relative commutants and hence the normalizing algebras could be beyond control.
\end{enumerate}
\end{Remark}

\noindent\textbf{Acknowledgements}: This work was initiated when the authors were visiting Indian Statistical Institute, Bangalore on Dec 2018. The authors acknowledge the warm hospitality and support of ISI, Bangalore. The second named author thanks Francesco Fidaleo for helpful discussions. Both the authors thank the anonymous referee for pointing to an error and for all the suggestions $($especially with Rem. \ref{Quasisplitingeneral}$)$  leading to better presentation.


\begin{thebibliography}{}

\bibitem[BM17]{BM17}
P. Bikram and K. Mukherjee, \emph{Generator masas in $q$-deformed  Araki-Woods von Neumann algebras and factoriality}, J. Funct. Anal., \textbf{273} (2017), no.~4, 1443--1478.

\bibitem[BKS97]{BKS}
M.~Bo$\overset{.}{\text{z}}$ejko, B.~K\"{u}mmerer and R.~Speicher, \emph{{$q$}-{G}aussian processes:
non-commutative and classical aspects}, Comm. Math. Phys., \textbf{185}
(1997), no.~1, 129--154.
  
\bibitem[BS91]{BS}
M.~Bo$\overset{.}{\text{z}}$ejko and R.~Speicher, \emph{An example of a generalized {B}rownian
  motion}, Comm. Math. Phys. \textbf{137} (1991), no.~3, 519--531. 
  
\bibitem[BDL90]{BDL90}
D. Buchholz, C. D'Antoni and R. Longo, \emph{Nuclear maps and modular structures. $\rm{I}$. General Properties}, J. Funct. Anal., \textbf{88} (1990), no.~2, 233--250.
  
\bibitem[DL83]{DL83}
C. D'Antoni and R. Longo, \emph{Interpolation by type {${\rm I}$} factors and the flip automorphism}, J. Funct. Anal., \textbf{51} (1983), no.~3, 361--371.

\bibitem[DoL84]{DoL84}
S. Doplicher and R. Longo, \emph{Standard and split inclusions of von {N}eumann algebras}, Invent. Math., \textbf{75} (1984), no.~3, 493--536.

  
\bibitem[F02]{F02}
F. Fidaleo, \emph{On the split property for inclusions of {$W^*$}-algebras}, Proc. Amer. Math. Soc., \textbf{130} (2002), no.~1, 121--127.
  

\bibitem[Hi03]{Hiai}
F. Hiai, \emph{{$q$}-deformed {A}raki-{W}oods algebras}, Operator algebras
and Mathematical Physics ({C}onstan\c ta, 2001), Theta, Bucharest, 2003,
pp.~169--202. 


\bibitem[Ho07]{Ho09}
C. Houdayer, \emph{Free {A}raki-{W}oods factors and {C}onnes' bicentralizer problem}, Proc. Amer. Math. Soc., \textbf{137} (2009), no.~11, 3749--3755.

\bibitem[HP17]{HP17}
C. Houdayer and S. Popa, \emph{Singular masas in type $\rm{III}$ factors and Connes' bicentralizer property}, arXiv:1704.07255, 2017.


\bibitem[HSV17]{HSV17}
C. Houdayer, D. Shlyakhtenko and S. Vaes, \emph{Classification of a family of non almost periodic free Araki-Woods factors}, arXiv:1605.06057, 2017.
  


\bibitem[Pa03]{Pa03}
V. Paulsen, \emph{Completely Bounded Maps and Operator Algebras}, Cambridge: Cambridge
University Press, (2003).
  
\bibitem[Po83]{Po83}
S. Popa, \emph{Singular maximal abelian {$\ast $}-subalgebras in continuous von {N}eumann algebras}, J. Funct. Anal.,  \textbf{50} (1983), no.~2, 253--268.

\bibitem[Ri05]{ER}
\'{E}.~Ricard, \emph{Factoriality of {$q$}-{G}aussian von {N}eumann algebras}, Comm. Math. Phys., \textbf{257} (2005), no.~3, 659--665. 

\bibitem[Sa]{Sa}S. Sakai, \textit{$C^*$ and $W^*$ algebras}, Springer-Verlag, (1971).

 
\bibitem[Sh97]{Shlyakhtenko}
D. Shlyakhtenko, \emph{Free quasi-free states}, Pacific J. Math., \textbf{177} (1997), no.~2, 329--368. 


\bibitem[StZs]{StZs}S. Stratila and L. Zsid\'{o}, \textit{Lectures on von Neumann Algebras}, Abacus Press, Tunbridge Wells, (1979).

\bibitem[St]{St}S. Stratila and L. Zsid\'{o}, \textit{Modular Theory in von Neumann Algebras}, Abacus Press, Tunbridge Wells, (1981).

\bibitem[Ta72]{Ta}
M.~Takesaki, \emph{Conditional expectations in von {N}eumann algebras}, J.
  Funct. Anal., \textbf{9} (1972), 306--321. 
  

\bibitem[Ta03]{Ta03}
M.~Takesaki, \emph{Theory of Operator Algebras. II.}, Encyclopaedia of Mathematical Sciences, 125. Operator Algebras and Non-commutative geometry, 6. Springer-Verlag, Berlin 2003. 

\end{thebibliography}
\end{document}